\definecolor{darkgreen}{rgb}{0,0.45,0}
\definecolor{mypurple}{rgb}{0.5, 0.0, 0.5}
\theoremstyle{plain}
\newtheorem{thm}{Theorem}[section]
\theoremstyle{plain}
\newtheorem{prop}[thm]{Proposition}
\theoremstyle{remark}
\newtheorem{rmk}[thm]{Remark}
\theoremstyle{plain}
\newtheorem{lem}[thm]{Lemma}
\theoremstyle{plain}
\newtheorem{cor}[thm]{Corollary}
\theoremstyle{definition}
\newtheorem{defi}[thm]{Definition}
\theoremstyle{definition}
\newtheorem{ex}[thm]{Example}
\newcommand{\ca}{\mathcal}
\newcommand{\caa}{\mathcal}
\newcommand{\Hom}{\ensuremath{\mathrm{Hom}}}
\newcommand{\HOM}{\textrm{\scshape{Hom}}}
\newcommand{\Comod}{\ensuremath{\mathsf{Comod}}}
\newcommand{\Mod}{\ensuremath{\mathsf{Mod}}}
\newcommand{\Alg}{\ensuremath{\mathsf{Alg}}}
\newcommand{\Coalg}{\ensuremath{\mathsf{Coalg}}}
\newcommand{\Mon}{\ensuremath{\mathsf{Mon}}}
\newcommand{\Comon}{\ensuremath{\mathsf{Comon}}}
\newcommand{\Cart}{\ensuremath{\mathrm{Cart}}}
\newcommand{\Cocart}{\ensuremath{\mathrm{Cocart}}}
\newcommand{\B}{\mathbf}
\DeclareMathOperator*\Meas{Meas}
\newcommand{\HS}{\operatorname{HS}}
\newcommand{\Der}{\operatorname{Der}}
\newcommand{\MDer}{\operatorname{MDer}}
\newcommand{\op}{\mathrm{op}}
\newcommand{\id}{\mathrm{id}}
\newcommand{\Cat}{\ensuremath{\mathsf{Cat}}}
\newcommand{\ot}{\otimes}
\newcommand{\ringk}{\Bbbk}
\tikzset{tick/.style={postaction={decorate,decoration={markings,mark=at position 0.5 with {\draw[-] (0,.4ex) -- (0,-.4ex);}}}}}
\newcommand{\tickar}{\begin{tikzcd}[baseline=-0.5ex,cramped,sep=small,ampersand replacement=\&]{}\ar[r,tick]\&{}\end{tikzcd}}
\title{Measuring comodules and enrichment}
\author[M. Hyland]{Martin Hyland}
\address{Department of Pure Mathematics and Mathematical Statistics,
  University of Cambridge, UK.}
\email{M.Hyland@dpmms.cam.ac.uk}
\author[I. L\'opez Franco]{Ignacio L\'opez Franco}
\address{Department of Mathematics and Applications, CURE, Universidad de la
  Rep\'ublica, Maldonado, Uruguay.}
\email{ilopez@cure.edu.uy}
\thanks{The second author was partially supported by SNI--ANII and PEDECIBA}
\author[C. Vasilakopoulou]{Christina Vasilakopoulou}
\address{School of Applied Mathematical and Physical Sciences, National Technical University of Athens, Greece.}
\email{cvasilak@math.ntua.gr}
\thanks{The third author was supported in the framework of H.F.R.I call ``3rd Call for H.F.R.I.'s
Research Projects to Support Faculty Members \& Researchers'' (H.F.R.I. Project Number: 23249).}
\begin{document}
\selectlanguage{british}  
\begin{abstract}
  This paper extends the theory of universal measuring comonoids
  to modules and comodules in braided monoidal categories. We generalise the
  universal measuring comodule $Q(M,N)$, originally introduced for modules over
  $\ringk$-algebras when $\ringk$ is a field, to arbitrary braided monoidal categories. In
  order to establish its existence, we prove a representability theorem for
  presheaves on opfibred categories and an adjoint functor theorem for opfibred
  functors. The global categories of modules and comodules, fibred and opfibred
  over monoids and comonoids respectively, are shown to exhibit an enrichment of
  modules in comodules. Additionally, we use our framework to study higher
  derivations of algebras and modules, defining along the way the
  non-commutative Hasse-Schmidt algebra.
\end{abstract}

\maketitle

\tableofcontents

\section{Introduction}\label{Introduction}

The Sweedler dual, or finite dual, of an associative and unital $\ringk{}$-algebra $A$ is
the $\ringk{}$-coalgebra $A^\circ$ characterized by the property that coalgebra
morphisms $C \to A^\circ$ are in natural bijection with algebra morphisms
$A \to C^*$. More generally, the Sweedler hom of two $\ringk{}$-algebras $A$ and $B$ is
a $\ringk{}$-coalgebra $P(A,B)$ with the universal property that coalgebra morphisms
$C \to P(A,B)$ correspond bijectively to algebra morphisms $A \to \Hom_\ringk{}(C,B)$
into the convolution algebra of $\ringk{}$-linear maps from $C$ to $B$. This
construction was shown to exist when $\ringk{}$ is a field in~\cite{Sweedler}.

Subsequent work extended these ideas beyond fields.
When $\ringk{}$ is a commutative ring, the Sweedler dual of a $\ringk{}$-algebra was studied
in~\cite{Sweedlerdual}. A broader generalization to (co)monoids in a braided
monoidal category was developed in~\cite{Measuringcomonoid}, where a detailed account of the enrichment of the category of monoids in the
category of comonoids was provided, using the universal measuring
comonoids $P(A,B)$ as the hom-objects.

This article builds on this prior work by shifting focus to modules and
comodules.
When $\ringk{}$ is a field, the \emph{universal measuring comodule} $Q(M,N)$, for modules $M$ and
$N$ over $\ringk{}$-algebras, was introduced in~\cite{Batchelor}.
It satifies the universal property that module morphisms $M \to \Hom_\ringk{}(X,N)$
are in one-to-one correspondence with comodule morphisms $X \to Q(M,N)$. These
objects have found applications in areas such as connections on bundles, loop
algebras, and representation theory \cite{Batchelor}.

Our main objective is to investigate measuring comodules within a braided monoidal
category, establish existence results for the universal measuring comodule in
this setting, and demonstrate an enrichment of the category of modules in the
category of comodules. To this end, we use the \emph{global category of modules},
whose objects are pairs $(A,M)$ consisting of a monoid $A$ and an $A$-module,
and the \emph{global category of comodules}, defined dually. The former is
a category fibred over the category of monoids, while the latter is opfibred
over the category of comonoids. This structure prompts us to explore the
interplay between the universal measuring comonoid and the (op)fibration
structures. In particular, we prove a representability theorem for presheaves on an opfibred
category and an adjoint functor theorem for opfibred functors.

As a key result,
we show that the global category of modules is enriched in the global category of
comodules. Moreover, in the symmetric monoidal setting, the enrichment is that of a
monoidal category.
This is accomplished by using the relationship between actions of monoidal
categories and enriched categories.

We end the paper with an application of measuring comonoids and comodules to the
study of higher derivations, or Hasse-Schmidt derivations~\cite{zbMATH03027033},
of algebras and modules. For instance, we exhibit the Hasse-Schmidt algebra of
a ring extension as a kind of colimit---a tensor product--- in the coalgebra-enriched
category of algebras, both in the non-commutative and commutative context.

The article is organized as follows: Section~\ref{background} reviews key
concepts, including (co)monoids and (co)modules in monoidal categories, local
presentability, (opmonoidal) actions inducing (monoidal) enrichment, and the
construction of the universal measuring comonoid. In
Section~\ref{sec:fibrations}, we examine conditions for the existence of
adjoints to fibred 1-cells between (op)fibrations over arbitrary
bases. Section~\ref{sec:enrichmentModComod} describes the global categories of
modules and comodules and defines the universal measuring comodule in that
context. After a short example about derivations,
Section~\ref{Universalmeasuringcomodule} establishes the existence of the
universal measuring comodule, and show that its existence implies that of the
universal measuring comonoid. Section~\ref{sec:enrichmentofmodincomod} derives the
enrichment of modules in comodules, while Section~\ref{sec:coinv-univ-meas}
identifies the object of coinvariants of the comonoid $Q(M,N)$ with an object of
maps from $M$ to $N$.
Finally,
Section~\ref{sec:higher-derivations} explores higher derivations of algebras and
modules through the lens of measuring comodules.

\section{Background}\label{background}

In this section, we recall some of the main concepts and constructions
needed for the development of the current work. In particular, we will
summarize some of the key results from \cite{Measuringcomonoid}
pertinent to this paper. We assume familiarity with
the basics of the theory of monoidal categories, found for example in
\cite{BraidedTensorCats}, and enriched categories~\cite{Kelly}.

\subsection{(Co)monoids and (co)modules}\label{ComonoidsComodules}

Suppose $(\ca{V},\otimes,I)$ is a monoidal category. A \emph{monoid} is an object $A$ equipped with
a multiplication $m\colon A\otimes A\to A$ and unit $\eta\colon I\to A$ that
satisfy the usual associativity
and unit laws; along with monoid morphisms, they form a category $\Mon(\ca{V})$.
Dually, we have \emph{comonoids} $(C,\delta\colon C\to C\otimes C,\epsilon\colon C\to I)$ whose category
is denoted by $\Comon(\ca{V})$. Both these categories are monoidal
if $\ca{V}$ is braided monoidal, and when the braiding is a symmetry they inherit the symmetry.
Sometimes we will write simply $\Mon$ or $\Comon$ when the monoidal category
$\ca{V}$ is understood.

If $F\colon\ca{V}\to\ca{W}$ is a lax monoidal functor, with structure maps $\phi_{A,B}\colon FA\otimes FB\to F(A\otimes B)$
and $\phi_0\colon I\to F(I)$, it induces a map between their categories
of monoids $\Mon F\colon\Mon(\ca{V})\to\Mon(\ca{W})$
by $(A,m,\eta)\mapsto(FA,Fm\circ\phi_{A,A},F\eta\circ\phi_0)$. Dually,
oplax monoidal functors induce maps between the categories of comonoids.

For functors between monoidal categories, standard doctrinal adjunction
arguments \cite{Doctrinal} imply that
oplax monoidal structures on a left adjoint correspond bijectively to lax
monoidal structures on the corresponding right adjoint.
This generalizes to parametrized adjunctions, as found in
\cite[Prop.~3.2.3]{PhDChristina} or for higher dimension in
\cite[Prop.~2]{Monoidalbicats&hopfalgebroids}. Therefore, if
$\ca{V}$ is braided monoidal closed, the internal hom
functor $[-,-]\colon\ca{V}^\op\times\ca{V}\to\ca{V}$ acquires a lax monoidal
structure as the parametrized adjoint of the strong monoidal tensor product functor $(-\otimes-)$.
The induced functor between the monoids is denoted by
\begin{equation}\label{defMon[]}
[-,-]\colon\Comon(\ca{V})^\op\times\Mon(\ca{V})\to\Mon(\ca{V});
\end{equation}
for $C$ a comonoid and $A$ a monoid, $[C,A]$ has the \emph{convolution} monoid structure.

A \emph{(right) $A$-module} for a monoid $A$ in $\ca{V}$
is an object $M$ of $\ca{V}$ equipped with an
arrow $\mu\colon M\otimes A\to M$ called
the \emph{action}, such that the diagrams
\begin{equation*}\label{defmod}
\begin{tikzcd}
M \otimes A \otimes A \arrow[r, "1 \otimes m"] \arrow[d, "\mu \otimes 1"'] 
& M \otimes A \arrow[d, "\mu"] \\
M \otimes A \arrow[r, "\mu"'] 
& M
\end{tikzcd}
\qquad\mathrm{and}\qquad
\begin{tikzcd}
& M \otimes A \arrow[rd, "\mu"] & \\
M\otimes I \arrow[rr, "r_M"',"\cong"] \arrow[ur, "1\otimes \eta"] & & M
\end{tikzcd}
\end{equation*}
commute. An \emph{$A$-module morphism} $(M,\mu)\to(M',\mu')$
is an arrow $f\colon M\to M'$ in $\ca{V}$ such that
$\mu'\circ(f\otimes A)=f\circ\mu$. For any monoid $A$ in $\ca{V}$,
there is a category $\Mod(\ca{V})_A$
of right $A$-modules
and $A$-module morphisms. Dually, we have a category of right $C$-comodules
$\Comod(\ca{V})_C$ for every $C\in\Comon(\ca{V})$.
Although we will use right (co)modules throughout the paper, there are analogous
presentations using various combinations of left or right (co)modules.
We will omit the base
monoidal category $\ca{V}$ from the notation when there is no room for ambiguity.

As well as inducing a functor $\Mon F$ between the categories of monoids, a lax monoidal functor $F\colon \ca{V}\to\ca{W}$
induces functors $\Mod F\colon \Mod(\ca{V})_A\to \Mod(\ca{W})_{FA}$
where the $FA$-action on $FM$ is
$F\mu\circ\phi_{M,A}\colon FA\otimes FM\to
F(A\otimes M)\to FM$.
In particular, the lax monoidal functor $[-,-]\colon \ca{V}^\mathrm{op}\times\ca{V}\to\ca{V}$
in a braided monoidal closed category induces a functor
\begin{equation} \label{defMod[]}
[-,-]\colon  \Comod_C^\mathrm{op}\times\Mod_A\to\Mod_{[C,A]}
\end{equation}
for any comonoid $C$ and monoid $A$.

Each monoid morphism $f\colon A\to B$ determines a \emph{restriction of scalars} functor 
$f^*\colon \Mod_B\to\Mod_A$
which makes every $B$-module $(N,\mu)$ into an $A$-module
$f^* N$ via the action
$\mu(f\otimes N)\colon N\otimes A\to
N\otimes B\to N$.
This functor commutes with the forgetful functors into $\ca{V}$.
Dually, each comonoid morphism $g\colon C\to D$ gives rise to a
\emph{corestriction of scalars} functor
$g_!\colon \Comod_C\longrightarrow\Comod_D$
which commutes with the forgetful functors into $\ca{V}$.
Notice that $f^*$ preserves all limits and $g_!$
all colimits that exist in $\ca{V}$.

\subsection{Local presentability}

In this section we collect some facts about locally presentable
categories. 
Recall that a category is $\kappa$-\emph{filtered}, for a regular cardinal $\kappa$,
if each subcategory with less than $\kappa$ arrows is the base of a co-cone. A $\kappa$-filtered colimit is a colimit of a functor whose domain is a $\kappa$-filtered category.
For a regular cardinal $\kappa$, a \emph{$\kappa$-accessible} category $\ca{C}$ is a
category, with a small set 
of $\kappa$-presentable objects
(\emph{i.e.}, objects $C$ such that $\ca{C}(C,-)$ preserves $\kappa$-filtered colimits) such that every object
in $\ca{C}$ is the $\kappa$-filtered colimit of $\kappa$-presentable objects.
An \emph{accessible category} is one that is $\kappa$-accessible for some $\kappa$.
A \emph{locally presentable} category is an
accessible category that is cocomplete.
A functor between accessible categories is \emph{accessible} if it preserves
$\kappa$-filtered colimits, for some regular cardinal $\kappa$. 
Adjoint functors between accessible categories are accessible.
We refer the reader to \cite{MakkaiPare,LocallyPresentable}
for more on the theory of locally presentable categories.

A \emph{locally presentable monoidal category} is a monoidal category $\ca{V}$
whose underlying category is locally presentable and whose tensor
product is an accessible functor. (The latter condition is guaranteed when, for
example, the monoidal category is biclosed.) In this context,
the categories $\Mon(\ca{V})$ and $\Comon(\ca{V})$ are both locally presentable.
This result can be found in \cite[\S~2]{MonComonBimon}, and in fact it follows from
the much more general `Limit Theorem' \cite[5.1.6]{MakkaiPare} since
both categories can be constructed as 2-categorical limits of accessible
functors; see also~\cite[Prop.~2.9]{Measuringcomonoid}.
Notice that (co)monoids inherit (co)completeness from $\ca{V}$ without any extra assumptions, see for example \cite[\S~2.6]{MonComonBimon}.

Regarding the categories of (co)modules over a (co)monoid, those are also locally presentable due the
following well-known result. We include a proof outline for the sake of completeness.

\begin{thm}\label{Monadiccomonadicpresentability}
  The category of (co)algebras for an accessible (co)monad on a locally
  presentable category is locally presentable.
\end{thm}
\begin{proof}
  The accessiblity of
  the category of (co)algebras again follows from the Limit Theorem~\cite[5.1.6]{MakkaiPare}, since
  these categories are limits \cite[p.~101]{MakkaiPare}; see also~\cite[2.78]{LocallyPresentable}.
  The category of coalgebras for a comonad on the category $\ca{C}$ is
  cocomplete if $\ca{C}$ is cocomplete. Locally presentability is ensured in
  this case. The category of algebras for an accessible monad on $\ca{C}$ is complete
  because $\ca{C}$ is. It is, therefore, locally presentable by~\cite[Thm.~6.1.4]{MakkaiPare}.
\end{proof}

In the case of the (co)monad given by tensoring on the right with a
(co)monoid, we have the following (compare with \cite{CoringsComod}).
\begin{cor}\label{comodlocpresent}
Suppose $\ca{V}$ is a locally presentable
monoidal category (so $\otimes$ is accessible). Then
$\Mod_A$ and $\Comod_C$
are locally presentable categories, for any monoid $A$ and any comonoid $C$ in $\ca{V}$.
\end{cor}

An important fact which will be used repeatedly is that any cocontinuous functor
with domain a locally presentable category has a right adjoint; this can be obtained as
a corollary to the following adjoint functor theorem, since the set of
$\kappa$-presentable objects of a locally $\kappa$-presentable category
form a small dense subcategory.

\begin{thm}\cite[5.33]{Kelly}\label{Kelly}
If the cocomplete $\ca{C}$ has a small dense subcategory, every
cocontinuous $S\colon \ca{C}\to\ca{B}$ has a right adjoint.
\end{thm}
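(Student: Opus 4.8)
The plan is to exploit the density presentation of $\ca{C}$. Let $K\colon\ca{A}\hookrightarrow\ca{C}$ be the inclusion of a small dense subcategory and write $\widehat{\ca{A}}=[\ca{A}^\op,\mathbf{Set}]$. Density is exactly the statement that the restricted Yoneda functor $N=\ca{C}(K-,-)\colon\ca{C}\to\widehat{\ca{A}}$ is fully faithful; since $\ca{C}$ is cocomplete the left Kan extension $L=\mathrm{Lan}_yK\dashv N$ exists, given on $W\in\widehat{\ca{A}}$ by the small colimit $LW=\colim\bigl(\mathrm{el}(W)\xrightarrow{\pi}\ca{A}\xrightarrow{K}\ca{C}\bigr)$ over the category of elements of $W$. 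Full faithfulness of $N$ says the counit $LN\Rightarrow\id$ is invertible, so $(\ca{C},N)$ is a reflective subcategory of the presheaf category $\widehat{\ca{A}}$.

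To obtain a right adjoint to $S$ it suffices to represent, for each $B\in\ca{B}$, the functor $T_B:=\ca{B}(S-,B)\colon\ca{C}^\op\to\mathbf{Set}$, which is \emph{continuous} (i.e.\ sends colimits to limits) because $S$ is cocontinuous and each $\ca{B}(-,B)$ is. First I would use density in the form $C\cong\colim_{(KA\to C)}KA$ together with continuity of $T_B$ to obtain $T_B(C)\cong\lim_{(K/C)^\op}T_B(K-)\cong\widehat{\ca{A}}(NC,W_B)$, naturally in $C$, where $W_B:=\ca{B}(SK-,B)\in\widehat{\ca{A}}$ is the restriction of $T_B$ along $\ca{A}^\op\hookrightarrow\ca{C}^\op$; here one uses that $K/C=\mathrm{el}(NC)$ and the standard formula for a $\mathbf{Set}$-weighted limit over a category of elements. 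Thus $T_B\cong\widehat{\ca{A}}(N-,W_B)$, and it only remains to show that $W_B$ lies in the essential image of $N$: granting this, $W_B\cong N(GB)$ with $GB:=LW_B$, and full faithfulness of $N$ gives $T_B(C)\cong\widehat{\ca{A}}(NC,N(GB))\cong\ca{C}(C,GB)$ naturally in $C$, while naturality in $B$ promotes $B\mapsto GB$ to the sought right adjoint $G$.

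The one point that uses the hypotheses in an essential way — and the one I expect to require the most care — is the claim that $W_B$ is a \emph{local} presheaf, i.e.\ that the reflection unit $\eta_{W_B}\colon W_B\to NLW_B$ is invertible. To see this, evaluate the isomorphism $T_B\cong\widehat{\ca{A}}(N-,W_B)$ at $C=LW_B$ to get $T_B(LW_B)\cong\widehat{\ca{A}}(NLW_B,W_B)$; on the other hand, feeding the colimit formula for $LW_B$ into the continuous functor $T_B$ gives $T_B(LW_B)\cong\lim_{\mathrm{el}(W_B)^\op}(W_B\circ\pi)\cong\widehat{\ca{A}}(W_B,W_B)$. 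Tracing the two computations shows the resulting bijection $\widehat{\ca{A}}(NLW_B,W_B)\cong\widehat{\ca{A}}(W_B,W_B)$ is precomposition with $\eta_{W_B}$, so the preimage of $\id_{W_B}$ is a retraction $r$ of $\eta_{W_B}$. Since $NLW_B$ already lies in the reflective subcategory, precomposition with $\eta_{W_B}$ is also injective on maps $W_B\to NLW_B$, and from $(\eta_{W_B}r)\eta_{W_B}=\eta_{W_B}=\id_{NLW_B}\,\eta_{W_B}$ we conclude $\eta_{W_B}r=\id$. Hence $\eta_{W_B}$ is an isomorphism, $W_B$ is in the image of $N$, and we are done. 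Everything besides this locality step is formal Yoneda-and-density bookkeeping; the delicate part is pinning down that the comparison bijection above really is $\eta_{W_B}^{\ast}$, and this is precisely where cocompleteness of $\ca{C}$ (so that $LW_B$ exists) and cocontinuity of $S$ (so that $T_B$ is continuous and the two evaluations of $T_B(LW_B)$ agree) are both needed.
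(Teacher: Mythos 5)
The paper gives no proof here—it simply cites Kelly [5.33]—and your argument is a correct reconstruction of essentially the standard one: realize $\ca{C}$ as reflective in $\widehat{\ca{A}}=[\ca{A}^\op,\mathbf{Set}]$ via the restricted Yoneda embedding $N$ (fully faithful by density, with left adjoint $L$ by cocompleteness), represent $\ca{B}(S-,B)\cong\widehat{\ca{A}}(N-,W_B)$ using cocontinuity of $S$, and check that $W_B$ is local so that $GB:=LW_B$ represents it. The one step you flag as delicate—that the comparison bijection $\widehat{\ca{A}}(NLW_B,W_B)\cong\widehat{\ca{A}}(W_B,W_B)$ is precomposition with $\eta_{W_B}$—does go through: it follows from naturality in the weight $W$ of the canonical isomorphism $T_B(W\star K)\cong\widehat{\ca{A}}(W,W_B)$ together with the triangle identity $\epsilon_{LW_B}\circ L\eta_{W_B}=\mathrm{id}$, which identifies the two colimit presentations of $LW_B$ via $L\eta_{W_B}$.
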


As an application, we can deduce the following, proved in \cite[\S~2.II]{Measuringcomonoid}.

\begin{prop}\label{Comonclosed}
For $\ca{V}$ a locally presentable braided monoidal closed category, $\Comon(\ca{V})$ is
comonadic over $\ca{V}$, and also monoidal closed. We denote its internal hom by
\begin{equation}\label{def:HOM}
{\HOM}\colon\Comon(\ca{V})^\op\times\Comon(\ca{V})\to\Comon(\ca{V}).
\end{equation}
\end{prop}

\subsection{Actions and enrichment}\label{sec:actionenrich}


In this section we briefly recall the relationship between actions of a monoidal
category and enriched categories. This was established for bicategories
in~\cite{enrthrvar} and can also be found in the case of monoidal categories
in~\cite{AnoteonActions}.

A left action of a monoidal category $\ca{V}$ on a category $\ca{D}$ is a
functor $*\colon \ca{V}\times\ca{D}\to\ca{D}$ together with natural isomorphisms
$X*(Y*D)\cong(X\otimes Y)*D$ and $D\cong I*D$, for $X$, $Y\in\ca{V}$ and
$D\in\ca{D}$, that satisfy a pentagon axiom and a unit axiom, similar to those
of a monoidal category. Categories equipped with an action of a monoidal
category are also referred to as \emph{actegories} in the literature
(see e.g. \cite{Mccruddencoalgebroidsreps}).

A monoidal category $\mathcal{V}$ acts on itself through its tensor
product. When $\mathcal{V}$ is monoidal left closed, the opposite category
$\mathcal{V}^{\mathrm{op}}$ acts on $\mathcal{V}$ via the internal hom. By left
closed, we mean that for each object $X$, the functor $-\otimes X$ admits a
right adjoint $[X,-]$, noting that terminology may differ across the literature.

The following two theorems give conditions
under which an action induces an enrichment, and
an opmonoidal action induces a monoidal enrichment.
To be precise, the composition of a $\ca{V}$-category $\ca{A}$ will consist of
morphisms $\ca{A}(B,C)\otimes\ca{A}(A,B)\to\ca{A}(A,C)$, the same convention as in
Kelly's book \cite{Kelly}.

\begin{thm}\label{actionenrich}
  Suppose that $\ca{V}$ is a monoidal category acting on a category $\ca{D}$ via
  a functor $*\colon \ca{V}\times\ca{D}\to\ca{D}$, such that the functor $-*D$
  has a right adjoint $F(D,-)$ for every $D\in\ca{D}$.
  Then $\ca{D}$ can be enriched
  in $\ca{V}$, meaning there exists a $\ca{V}$-category $\underline{\ca{D}}$
  with hom-objects $\underline{\ca{D}}(A,B)=F(A,B)$ and underlying category
  $\ca{D}$.

  Moreover, if $\ca{V}$ is left closed, then $\underline{\ca{D}}$ is tensored,
  with $X*D$ serving as the tensor product of $X\in\ca{V}$ and $D\in\ca{D}$. If
  $\ca{V}$ is also symmetric, then $\underline{\ca{D}}$ is cotensored provided
  the functors $X*-$ have right adjoints. Finally, the opposite category
  $\ca{D}^\op$ can also be enriched in $\ca{V}$.
\end{thm}

The above follows from a much stronger result of \cite{enrthrvar}
regarding categories enriched in bicategories; details can be found
in \cite{AnoteonActions} and \cite[\S~4.3]{PhDChristina}.
Although the symmetry in the statement is not essential, it has the advantage of
settling us within the framework of Kelly's book \cite{Kelly}, a straightforward
scenario in which to consider cotensor products.

In some instances, the enriched category $\underline{\ca{D}}$ from the previous
theorem has a monoidal structure, as provided by the following result.

\begin{thm}[{\cite[Thm.~3.6 and 3.10]{Measuringcomonoid}}]\label{opmonactionmonenrich}
  Assume that the monoidal category $\ca{V}$ in Theorem~\ref{actionenrich} is
  braided, $\ca{D}$ is a monoidal category and the action is opmonoidal. Then, the induced enriched
  category $\underline{\ca{D}}$ is monoidally $\ca{V}$-enriched, with underlying
  monoidal category $\ca{D}$.
  Furthermore, $\underline{\ca{D}}$ is braided when $\ca{D}$ and the action are braided.
\end{thm}
Let us expound the statement of the theorem. An opmonoidal action of
the braided monoidal category $\ca{V}$ on a monoidal category $\ca{D}$ is
defined as an action $*\colon \ca{V}\times \ca{D}\to\ca{D}$ equipped with an
opmonoidal structure, where the structure isomorphisms are opmonoidal
transformations. For details, see \cite[Definition~3.5]{Measuringcomonoid}. When
we say that the monoidal category $\ca{D}$ is monoidally
$\ca{V}$-enriched we mean that its tensor product underlies a $\ca{V}$-functor
$\underline{\ca{D}}\otimes\underline{\ca{D}}\to\underline{\ca{D}}$ and its
structure natural transformations are $\ca{V}$-natural. When the monoidal
category $\ca{D}$ is braided and $\ca{V}\times\ca{D}\to\ca{D}$ is braided
opmonoidal, then \cite[3.10]{Measuringcomonoid} shows that the braiding is
$\ca{V}$-natural.

When the monoidal category $\ca{V}$ is symmetric monoidal closed, then the
action $[-,-]\colon\ca{V}^\op\times\ca{V}\to\ca{V}$ is a braided lax monoidal functor,
and the structural isomorphisms $[X,[Y,Z]]\cong[X\otimes Y,Z]$ and $Z\cong
[I,Z]$ are monoidal transformations. This, along with the following lemma, was
shown in \cite{Measuringcomonoid}.

\begin{lem}\label{inthomaction}
Suppose $\ca{V}$ is a braided monoidal closed category. The internal hom
$[-,-]\colon \ca{V}^\mathrm{op}\times\ca{V}\to\ca{V}$ is a monoidal action of
$\ca{V}^\mathrm{op}$ on $\ca{V}$, and it induces an action
of $\Comon(\ca{V})^\mathrm{op}$ on $\Mon(\ca{V})$. When $\ca{V}$ is symmetric,
then this action is braided lax monoidal.
\end{lem}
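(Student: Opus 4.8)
The plan is to prove the three assertions in order, using doctrinal adjunction and the preservation results of Section~\ref{background} as black boxes. First I would observe that the statement that $[-,-]\colon\ca{V}^\op\times\ca{V}\to\ca{V}$ is a lax monoidal action of $\ca{V}^\op$ on $\ca{V}$ is precisely the content cited as \cite[3.7\&5.1]{Measuringcomonoid}: the lax monoidal structure on $[-,-]$ arises, as recalled just before \eqref{defMon[]}, as the parametrized right adjoint of the strong monoidal functor $\otimes\colon\ca{V}\times\ca{V}\to\ca{V}$, and the pseudomodule coherence data for the action of $\ca{V}^\op$ on $\ca{V}$ is obtained by transposing, along the defining adjunctions, the pseudomodule data witnessing that $\ca{V}$ acts on itself by $\otimes$. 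So the first sentence is essentially a citation together with the remark that a parametrized/doctrinal adjunction transports the ``pseudomodule in $\B{MonCat}_c$'' structure along.

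Next, for the induced action on monoids: by the functoriality described around \eqref{defMon[]}, applying $\Mon(-)$ to the lax monoidal $[-,-]$ gives $\Mon[-,-]\colon\Comon(\ca{V})^\op\times\Mon(\ca{V})\to\Mon(\ca{V})$, where the domain factor $\ca{V}^\op$ produces $\Comon(\ca{V})^\op$ since a lax monoidal functor out of $\ca{V}^\op$ sends comonoids of $\ca{V}$ (i.e. monoids of $\ca{V}^\op$) to monoids. To see that this is again an action, I would invoke the general principle \cite[3.2.3]{PhDChristina} that $\Mon$ (equivalently, $\Comon$ on the appropriate variables) is a $2$-functor on the relevant $2$-categories of (co)monoidal categories and (op)lax functors that preserves pseudomodule structures; applying it to the pseudomodule data for $[-,-]$ produces the pseudomodule data for $\Mon[-,-]$, with all coherence diagrams holding because $\Mon$ is a $2$-functor and they held upstairs.

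Finally, for the symmetric case: if $\ca{V}$ is symmetric then $\otimes$ is a \emph{braided} strong monoidal functor, and the braiding on a left adjoint transports to a braiding on its parametrized right adjoint by the same doctrinal-adjunction bookkeeping, so $[-,-]$ becomes a braided (lax) monoidal action; the last clause, that $\Mon[-,-]$ is then lax monoidal (braided), follows because $\Mon$ preserves the braided structure. The only point requiring genuine care — and the step I expect to be the main obstacle — is verifying the \emph{op}-variance and the coherence axioms for the pseudomodule structure of $[-,-]$ on the contravariant slot: one must check that the associativity constraint $[X\otimes Y, A]\cong[X,[Y,A]]$ and the unit constraint $[I,A]\cong A$ are compatible, as opmonoidal/lax monoidal $2$-cells, with the pseudomonoid structure of $\ca{V}^\op$ and the monoidal structure of $\ca{V}$, i.e. that they satisfy the pentagon and triangle of a pseudomodule inside $\B{MonCat}_c$. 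This is a diagram chase that is routine once phrased correctly but easy to get wrong with signs of variance; everything else is formal transport along adjunctions and functoriality of $\Mon$.
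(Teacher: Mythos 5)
Your argument is correct and takes essentially the same route as the paper, which gives no proof of its own but defers to \cite[3.7\&5.1]{Measuringcomonoid}: the lax monoidal structure on $[-,-]$ via doctrinal/parametrized adjunction from the strong monoidal $\otimes$, transport of the pseudomodule coherence data along that adjunction, functoriality of $\Mon$ on (lax monoidal functors between) braided monoidal categories, and the braided refinement in the symmetric case. The coherence verifications you single out as the delicate step are precisely the content the paper leaves to the cited reference.
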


The symmetry hypothesis in the lemma is not only used to give a braided lax
monoidal structure on $[-,-]$ but also to guarantee that the braiding lifts to
$\Comon(\ca{V})$ and $\Mon(\ca{V})$.

We will frequently employ the dual form of the lemma, in which
$[-,-]^{\mathrm{op}}$ defines an opmonoidal action of $\mathcal{V}$ on
$\mathcal{V}^{\mathrm{op}}$, and this action extends to an opmonoidal action of
$\Comon(\mathcal{V})$ on $\Mon(\mathcal{V})^{\mathrm{op}}$.

\subsection{Universal measuring comonoids}\label{unimeascom}

A fundamental objective of \cite{Measuringcomonoid} was to establish an
enrichment of the category of monoids over the category of comonoids.
The key results are outlined below, with
further details available in Sections~4 and 5 of \cite{Measuringcomonoid}. Certain aspects of this theory were generalised in the many-object setting of enriched
categories in \cite{VCocats}.

Suppose that $(C,\delta,\varepsilon)$ is a comonoid and $(A,m,\eta)$, $(B,m,\eta)$ are monoids in a braided monoidal
category. A \emph{measuring} is a morphism $\phi\colon A\otimes C\to B$ that makes the
following two diagrams commutative (where $c$ is the braiding).
\begin{equation}\label{measuring_mon}
  \begin{tikzcd}[column sep=.6in]
  A\ot A\ot C \arrow[r, "A\ot A\ot\delta"] \arrow[d, "m\ot C"'] &
  A\ot A\ot C\ot C \arrow[r, "A\ot c_{A,C}\ot C"] &
  A\ot C\ot A\ot C \arrow[d, "\phi\ot \phi"] \\
  A\ot C \arrow[r, "\phi"] &
  B &
  B\ot B \arrow[l, "m"']
\end{tikzcd}
\quad
\begin{tikzcd}
  C\ar[r,"\varepsilon"]\ar[d,"\eta\otimes C"']&I\ar[d,"\eta"]\\
  A\otimes C\ar[r,"\phi"]& B
\end{tikzcd}
\end{equation}

One says that $C$ is a \emph{measuring comonoid.}
The sets $\Meas(A,C,B)$ of measurings as above are the values of a functor
$\Meas{}:\Mon(\ca{V})^{\mathrm{op}}\times\Comon(\ca{V})^{\mathrm{op}}\times
\Mon(\ca{V})\to\mathsf{Set}$. A \emph{universal measuring comonoid} for two
monoids $A$ and $B$, denoted by $P(A,B)$, is a representation of the presheaf
$\Meas(A,-,B)$ on $\Comon(\ca{V})$; so
$\Comon(\ca{V})(-,P(A,B))\cong\Meas(A,-,B)$.
We note that universal measuring $\ringk{}$-coalgebras, for a commutative
ring $\ringk{}$, were considered in~\cite{BarrCoalgebras}, for
algebras but also for algebras for a PROP, which includes the case of
associative algebras.

When the braided monoidal category $\ca{V}$ is
closed, the two diagrams say that the morphism $\hat \phi\colon A\to [C,B]$ associated to $\phi$ is
a morphism of monoids, so $\Meas(A,C,B)\cong\Mon(\ca{V})(A,[C,B])$ naturally in
all three variables. Therefore, we have:

\begin{thm}\cite[Thm.~4.1]{Measuringcomonoid}\label{measuringcomonoidprop}
If $\ca{V}$ is locally presentable braided monoidal closed category, the functor
$[-,B]^\op\colon\Comon(\ca{V})\to\Mon(\ca{V})^\op$ has a right adjoint
$P(-,B)$, i.e.
there is a natural isomorphism
\begin{equation}
\Mon(\ca{V})(A,[C,B])\cong\Comon(\ca{V})(C,P(A,B)).\label{eq:12}
\end{equation}
\end{thm}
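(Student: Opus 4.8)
The plan is to exhibit $P(-,B)$ as a right adjoint by showing that the left-hand functor $\Mon[-,B]^\op\colon\Comon(\ca{V})\to\Mon(\ca{V})^\op$ is cocontinuous and then invoking the adjoint functor theorem, Theorem \ref{Kelly}. For this to apply we need $\Comon(\ca{V})$ to be cocomplete with a small dense subcategory, which is exactly the content of the local presentability of $\Comon(\ca{V})$ recorded in the Background (it is the Eilenberg--Moore category of an accessible comonad on $\ca{V}$ by Proposition \ref{Comonclosed}, hence locally presentable, hence has a small dense subcategory of presentable objects). So the substance of the argument is the preservation of colimits.

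First I would recall that $\Mon[-,B]^\op$ sends colimits in $\Comon(\ca{V})$ to colimits in $\Mon(\ca{V})^\op$, i.e.\ limits in $\Mon(\ca{V})$. Colimits in $\Comon(\ca{V})$ are created by the comonadic forgetful functor $U_c\colon\Comon(\ca{V})\to\ca{V}$, and limits in $\Mon(\ca{V})$ are created by the monadic forgetful functor $U_m\colon\Mon(\ca{V})\to\ca{V}$; both facts are standard for (co)monadic functors over a (co)complete base. Thus it suffices to check that the composite $\Comon(\ca{V})\xrightarrow{\Mon[-,B]}\Mon(\ca{V})\xrightarrow{U_m}\ca{V}$ carries colimits of $\Comon(\ca{V})$ to limits in $\ca{V}$, and since these colimits are computed in $\ca{V}$, this reduces to the statement that $[-,B]\colon\ca{V}^\op\to\ca{V}$ turns colimits into limits --- which is precisely the fact that the internal hom is continuous in its first variable, true in any monoidal closed category because $[-,B]$ is a right adjoint (to $(-)\otimes B$ up to the symmetry, or directly by the defining adjunction $\ca{V}(X\otimes Y,B)\cong\ca{V}(X,[Y,B])$ read in $Y$). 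One must be a little careful that the monoid (convolution) structure on $[C,B]$ is assembled compatibly, but this is automatic: the structure maps on a limit of monoids are the unique ones making the projections monoid morphisms, and the convolution structure on $[-,B]$ is functorial in $C$, so the canonical comparison is a monoid isomorphism.

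The main obstacle, and the one point deserving care, is making the bookkeeping of variances airtight: $\Mon[-,B]$ is contravariant in the comonoid variable, we are precomposing with $(-)^\op$, and we are landing in $\Mon(\ca{V})^\op$, so one is juggling three dualizations at once, and it is easy to convert a "preserves colimits" into a "preserves limits" in the wrong slot. The clean way is to phrase everything in terms of the genuinely covariant functor $\Mon[-,B]\colon\Comon(\ca{V})^\op\to\Mon(\ca{V})$ and observe that $\Mon[-,B]^\op$ cocontinuous is equivalent to $\Mon[-,B]$ continuous, then verify continuity of the latter directly via the creation-of-limits arguments above applied to $U_m$ and to $U_c^\op$. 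Once cocontinuity of $\Mon[-,B]^\op$ is established, Theorem \ref{Kelly} produces a right adjoint $P(-,B)\colon\Mon(\ca{V})^\op\to\Comon(\ca{V})$, and unpacking the resulting adjunction isomorphism $\Mon(\ca{V})^\op(\Mon[-,B]^\op(C),A)\cong\Comon(\ca{V})(C,P(A,B))$ and rewriting the left side as $\Mon(\ca{V})(A,[C,B])$ gives exactly the natural isomorphism \eqref{eq:12}. Naturality in $A$ is built into the adjunction; naturality in $C$ follows from functoriality of $\Mon[-,B]$ and the fact that the isomorphism is the adjunction counit/unit bijection, which is natural in both arguments by construction.
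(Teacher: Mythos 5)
Your proposal is correct and follows essentially the route the paper intends: the paper itself only cites \cite[Thm.~4.1]{Measuringcomonoid}, but it sets the result up explicitly as an application of Theorem \ref{Kelly} to the cocontinuous functor $\Mon[-,B]^\op$ out of the locally presentable $\Comon(\ca{V})$, with cocontinuity reduced via the (co)monadic forgetful functors to continuity of $[-,B]\colon\ca{V}^\op\to\ca{V}$ --- exactly your argument. Your care with the variances and with the compatibility of the convolution monoid structure on limits is sound.
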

The parametrized adjoint $P\colon\Mon(\ca{V})^\op\times\Mon(\ca{V})\to\Comon(\ca{V})$
of $\Mon[-,-]$ is called the \emph{Sweedler hom} functor.
When
$\ca{V}$ is the category of vector spaces over a field, and $A$ is a
$\ringk{}$-algebra, $P(A,\ringk{})$ is the well-known  \emph{Sweedler} or \emph{finite dual} $A^\circ$ of $A$; see \cite{Sweedler}.

Moreover, the functor $[C,-]^\op\colon\Mon(\ca{V})^\op\to\Mon(\ca{V})^\op$ for a comonoid $C$ has a right adjoint $(C\triangleright-)^\op$, and the functor
of two variables
$\triangleright\colon\Comon(\ca{V})\times\Mon(\ca{V})\to\Mon(\ca{V})$ is called the \emph{Sweedler product}
in \cite{AnelJoyal}.

By applying Theorems~\ref{actionenrich} and \ref{opmonactionmonenrich}
to the action of $\Comon(\ca{V})$ on $\Mon(\ca{V})^\op$ given by the
internal hom functor $[-,-]$ considered in Lemma~\ref{inthomaction},
we obtain a category $\mathcal{M}$ enriched in $\Comon(\ca{V})$ with underlying
category $\Mon(\ca{V})^\op$ and hom-objects $\mathcal M(A,B)=P(B,A)$. Taking the
opposite enriched category, we have:

\begin{thm}\label{monoidenrichment}
Suppose $\ca{V}$ is a locally presentable symmetric monoidal closed category.
 The category $\Mon(\ca{V})$ is a monoidal $\Comon(\ca{V})$-category,
 tensored and cotensored, with $\underline{\Mon(\ca{V})}(A,B)=P(A,B)$,
 cotensor $[C,B]$ and tensor $C\triangleright B$ for any comonoid $C$ and monoid $B$.
\end{thm}

Before concluding this section, we note a straightforward observation that will prove helpful later. 
For a pair of monoid measurings $\phi\colon A\otimes C\to B$ and $\phi'\colon A'\otimes C'\to B'$ as previously defined, it may be desirable for the morphism
\begin{equation}
  \label{eq:46}
  \phi\bullet\phi'\colon
  A\otimes A'\otimes C\otimes C'
  \cong
  A\otimes C\otimes A'\otimes C'
  \xrightarrow{\phi\otimes\phi'}
  B\otimes B'
\end{equation}
to also be a measuring, where the isomorphism arises from the braiding.
This, however, only holds if the braiding is a symmetry.
\begin{lem}
  \label{l:7}
  In a symmetric monoidal category,
  the morphism $\phi\bullet\phi'$ is a measuring.
\end{lem}
The proof of the lemma consists of writing the axioms of a measuring for
$\phi\bullet\phi'$ and noticing that the symmetry of the braiding is required.
In the case of a symmetric monoidal \emph{closed} category, the lemma has an intuitive
intepretation. If $\hat\phi$ and $\hat\phi'$ are the monoid morphisms corresponding
to $\phi$ and $\phi'$, respectively, then (\ref{eq:46}) correponds to
\begin{equation*}
  A\otimes A'\xrightarrow{\hat\phi\otimes\hat\phi'}[C,B]\otimes[C',B']
  \xrightarrow{\chi}[C\otimes C',B\otimes B']
\end{equation*}
where $\chi$ is part of the lax monoidal structure on the internal hom induced
by the braiding.  We can guarantee that $\chi$ is a morphism of monoids only when
this lax monoidal structure is braided, which is to say, that the monoidal
structure on the tensor product is braided, or that the braiding is a
symmetry.

We close the section with one of the central examples of measuring from
\cite{MeasuringCoalgebras} making the connection with derivations, of which we
recall the definition below.

If $M$ is a bimodule over a $\ringk{}$-algebra $A$
over a commutative ring $\ringk{}$, a \emph{derivation}, or \emph{ordinary
  derivation} to distinguish them from higher derivations, is a $\ringk{}$-linear
morphism $\delta\colon A\to M$ that satisfies the
Leibniz rule
$\delta(ab)=a\cdot\delta(b)+\delta(a)\cdot b$, for $a$, $b\in A$. When $f\colon A\to B$ is
a $\ringk{}$-algebra morphism, then $B$ can be regarded as an $A$-bimodule via $f$ and
derivations $\delta\colon A\to B$ will be called $f$-derivations. The pairs
$(f,\delta)$ form a set $\Der(A,B)$.

If $C$ is a coalgebra over a field $\ringk{}$, with comultiplication $\Delta$
and counit $\varepsilon$, an element $g\in C$ is \emph{group-like} if the
correponding linear map $\ringk{}\to C$ is a morphism of coalgebras; that is,
$\Delta(g)=g\otimes g$ and $\varepsilon(g)=1$. An element $x\in C$ is \emph{$g$-primitive} if
$\Delta(x)=g\otimes x+x\otimes g$ (it follows that $\varepsilon(x)=0$).

\begin{ex}
  \label{ex:3}
  Denote by $C_1$ the dual coalgebra of the two-dimensional $\ringk{}$-algebra of dual
  numbers $\ringk{}[\epsilon]\cong \ringk{}[x]/(x^2)$. If $\{g,x\}$ is the dual basis of
  $\{1,\epsilon\}\subset \ringk{}[\epsilon]$, we have that $g$ is a group-like element
  and $x$ is a $g$-primitive element.
  This coalgebra is the representing object of the functor
  $\Coalg\to\mathsf{Set}$ that sends each coalgebra
  $C$ to the set of all the pairs $(g,x)$ where $g$ is a group-like element and
  $x$ is $g$-primitive. 
  Batchelor~\cite{Differenceoperators} showed that there is bijection between
  measuring maps $A\otimes C_1 \to B$ and elements of the
  set $\operatorname{Der}(A,B)$, where $B$ is regarded as an
  $A$-bimodule via restriction of scalars along $f$; see above.
  From the defining property of the universal measuring coalgebra, we obtain a
  canonical isomorphism
  \begin{equation*}
    \Coalg(C_1,P(A,B))\cong \operatorname{Der}(A,B).
  \end{equation*}
  The adjunction isomorphism $\Coalg(C_1,P(A,B))\cong\Alg(A,[C_1,B])$ yields the
  well-known natural isomorphism $\operatorname{Der}(A,B)\cong \Alg(A,B[x]/(x^2))$.

\end{ex}

\section{Adjoints to fibred 1-cells}\label{sec:fibrations}

This section assumes familiarity with the basic notions of (op)fibrations, or
(op)fibred categories, originally introduced in
\cite{Grothendieckcategoriesfibrees}, a good account of which can be found
in~\cite{streicher2023fiberedcategorieslajean}.

If $U\colon \ca{C}\to\caa{X}$ and $V\colon \ca{D}\to\caa{X}$ are opfibrations
and $K\colon \ca{C}\to\ca{D}$ satisfies $VK=U$ and preserves cocartesian
morphisms, then the fibrewise
adjunctions $K_X\dashv R_X\colon\ca{D}_X\to\ca{C}_X$, for $X\in\caa{X}$, paste
into an adjunction $K\dashv R$ where $UR=V$, see e.g. first part of \cite[Prop.~8.4.2]{Handbook2}.
 Here $R$ need not preserve
cocartesian morphisms. Indeed, one defines $R$ on a cocartesian morphism $h\colon
B\to f_!(B)$ over $f\colon X\to X'$ as $R_X(B)\to f_!R_X(B)\to R_{X'}f_!(B)$
where the first morphism is a cocartesian lifting of $f$ and the second is the
component at $B$ of the natural transformation that is
the mate of $K_{X'}f_!\cong f_!K_X$ via $K_X\dashv R_X$ and $K_{X'}\dashv
R_{X'}$. 

One is naturally led to consider the question of when
for an opfibred 1-cell
\begin{equation}
  \label{eq:startingshape}
  \begin{tikzcd}
    \ca{C}\ar[r,"K"]\ar[d,"U"']&
    \ca{D}\ar[d,"V"]\\
    \caa{X}\ar[r,"F"]&
    \caa{Y}
  \end{tikzcd}
\end{equation}
a right adjoint for $F$ and fibrewise right adjoints for $K$ induce an
adjunction between the total categories. Related questions on existence of adjoints between fibrations can be found in \cite{FibredAdjunctions}.

Starting with a representability result (Lemma~\ref{l:5}), in this section we give what seems to be
the shortest proof (Corollary~\ref{cor:3}) that avoids two-dimensional category theory.
Finally, we investigate when a right adjoint for $K$ gives rise to a right
adjoint for $F$.
Before all that, we begin by recalling the basic definitions involved.

\subsection{Basic definitions}\label{fibrationsbasicdefinitions}

All our (op)fibrations will be equipped with a choice of (co)cartesian liftings,
usually known as a cleavage. We choose to de-emphasise the alternative
description of cloven (op)fibrations as indexed categories.

If $P\colon \ca{A}\to\caa{X}$ is a functor, we denote by $\ca{A}_X$ the fibre of $P$
over $X\in\caa{X}$: the subcategory of
$\ca{A}$ defined by all the morphisms that are mapped by $P$ to $1_X$.
Recall that $P$ is a (cloven) \emph{fibration} if for all $f\colon X\to Y$ in
$\caa{X}$ and $B\in\ca{A}_Y$, there is a (chosen) cartesian lifting denoted by $\tilde{f}=\Cart(f,B)\colon f^*(B)\to B$, and dually for an opfibration. 
The category $\ca{A}$ is the \emph{total} category,
$\caa{X}$ is the \emph{base} category.
Any arrow in the total category of a fibration factorises uniquely into
a vertical arrow followed by a cartesian one, and dually for opfibrations.

 For every morphism $f\colon X\to Y$ in the base $\caa{X}$ of a cloven
 fibration, there is \emph{reindexing functor}
$f^*\colon \ca{A}_Y\to\ca{A}_X$
mapping each object to the domain of the chosen cartesian lifting along $f$.
It can be verified that $1_{\ca{A}_X}\cong(1_A)^*$ and that for composable morphisms in the base category,
$(g\circ f)^*\cong g^*\circ f^*$. If these isomorphisms are equalities, we have the notion of a \emph{split} fibration.

A \emph{fibred 1-cell} $(S,F)\colon P\to Q$ between two fibrations $P\colon \ca{A}\to\caa{X}$
and $Q\colon \ca{B}\to\caa{Y}$
is a commutative square
\begin{equation}\label{commutativefibredcell}
\begin{tikzcd}[row sep=small]
\ca{A} \arrow[r, "S"] \arrow[d, "P"'] & \ca{B} \arrow[d, "Q"] \\
\caa{X} \arrow[r, "F"] & \caa{Y}
\end{tikzcd}
\end{equation}
where $S$ preserves cartesian arrows.
In particular, when $P$ and $Q$ are fibrations over
the same base, a \emph{fibred functor} is a fibred 1-cell $(S,1_{\caa{X}})$.
Dually, we have the notions of an \emph{opfibred 1-cell}
and \emph{opfibred functor}.
Any commutative diagram~\eqref{commutativefibredcell} gives rise to a collection of functors
$S_X\colon\ca{A}_X\to \ca{B}_{FX}$
between the fibre
categories.

Given two fibred 1-cells $(S,F),(T,G)\colon P\rightrightarrows Q$,
a \emph{fibred 2-cell} from $(S,F)$ to
$(T,G)$ is a pair of natural transformations 
$(\alpha\colon S\Rightarrow T,\beta\colon F\Rightarrow G)$
with $\alpha$ lying above $\beta$, \emph{i.e.}, $Q\alpha
=\beta P$.
Dually, we have the notion of an \emph{opfibred 2-cell}.

There is a 2-category $\B{Fib}$ of
fibrations over arbitrary base categories,
fibred 1-cells and fibred 2-cells, and dually, a 2-category
$\B{OpFib}$ of opfibrations.
Both are equipped with a forgetful functor to the 2-category
$\Cat^{\mathbbm{2}}$ of 2-functors $\mathbbm{2}\to\Cat$, 2-natural
transformations and modifications.

\subsection{A representability result}\label{fibredadjunctions}

Recall that a representation of a presheaf
$\phi\colon\ca{C}^{\mathrm{op}}\to\mathsf{Set}$ is a pair $(C,x)$ where $C$ is an object
of $\ca{C}$ and $x\in\phi(C)$, and the corresponding natural transformation
$\ca{C}(-,C)\Rightarrow\phi$ is an isomorphism. 

The following lemma gives necessary and sufficient conditions
for a presheaf $\phi$ on a total category of an opfibration
$U\colon\ca{C}\to\caa{X}$ to be representable
by an object over $X\in\caa{X}$
in terms of a certain `cartesian' natural transformation
$\beta\colon\phi\Rightarrow\caa{X}(U-,X)$ satisfying a pullback property.
We will denote by $J\colon\ca{C}_{X}\hookrightarrow\ca{C}$ the inclusion
functor, by $\Delta\caa{X}(X,X)$ the presheaf on $\ca{C}_X$ that is constant at
the set $\ca{X}(X,X)$, and by $1_X\colon 1 \Rightarrow\Delta\caa{X}(X,X)$ the natural transformation
from the terminal presheaf to the presheaf on $\ca{C}_X$ constant at the set
$\caa{X}(X,X)$ with all components equal to the identity $1_X$.

\begin{lem}
  \label{l:5}
  With the notation above,
  a presheaf $\phi\colon\ca{C}^\op\to\mathsf{Set}$ is representable by $(C,x)$ with $C\in\ca{C}_X$ if and
  only if there exists a natural transformation
  $\beta\colon\phi\Rightarrow\caa{X}(U-,X)$ satisfying:
  \begin{enumerate}
  \item \label{item:5} 
  There is a pullback square in $[\ca{C}^\op_X,\mathsf{Set}]$
    \begin{equation}
      \label{eq:23}
      \begin{tikzcd}
        \ca{C}_{X}(-,C)\ar[r,"{\pi}"]
        \ar[d]
        &
        \phi J^{\mathrm{op}}
        \ar[d,"{\beta J^{\mathrm{op}}}"]
        \\
        1
        \ar[r,"1_{X}"]
        &
        \Delta\caa{X}(X,X)\ar[ul,phantom,"\lrcorner", very near end]
      \end{tikzcd}
    \end{equation}
    where $\pi_C(1_C)=x$;
  \item \label{item:6} $\beta$ is a cartesian natural transformation when
    restricted to cocartesian morphisms; \emph{i.e.}, the naturality squares of
    $\beta$ corresponding to cocartesian morphisms in $\ca{C}$ are pullbacks.
  \end{enumerate}
\end{lem}
\begin{proof}
  First suppose that $\phi$ is of the form $\ca{C}(-,C)$ with $U(1_C)=X$. Define
  $\beta$ as the natural transformation corresponding to $1_X\in
  \caa{X}(U(1_C),X)$. This is, $\beta_{C'}(f)=Uf$.
  For any $C'\in\ca{C}_X$, the
  square~\eqref{eq:23} now looks like
  \begin{equation}
    \label{eq:37}
    \begin{tikzcd}
      \ca{C}_X(C',C)\ar[r,hook]\ar[d]&\ca{C}(C',C)\ar[d,"U"]\\
      1\ar[r,"1_X"]&\caa{X}(X,X)
    \end{tikzcd}
  \end{equation}
  where the right vertical arrow sends $f$ to $Uf$. This square is clearly a
  pullback. Requiring that $\beta$ be a cartesian natural transformation on
  cocartesian morphisms is simply restating the universal property of
  cocartesian morphism (at least, those with codomain $C$).

  In the case that $\phi$ is represented by $(C,x)$, then $\beta$ is defined as
  \begin{equation*}
    \beta\colon \phi\cong \ca{C}(-,C)\xrightarrow{U}\caa{X}(U-,X), \qquad
    \beta_C(x)=1_X.
  \end{equation*}
  which
  is cartesian with respect to cocartesian morphisms since natural isomorphisms
  are cartesian transformations. The pullback square~\eqref{eq:23} is obtained
  from the square~\eqref{eq:37} by composing with the isomorphism
  $\ca{C}(-,C)\cong \phi$.

  We shall now prove the converse: that the two conditions from the statement
  imply that $(C,x)$ represents $\phi$, where
  $x=\pi_{C}(1_{C})$.
  Given
  $x'\in\phi(C')$, we have to find a unique $g\colon C'\to C$ with
  $\phi(g)(x)=x'$.

  Pick a cocartesian lifting $h\colon C'\to \tilde X$ of $\beta_{C'}(x')\colon
  U(C')\to X$. By hypothesis we have a pullback square as
  depicted, and thus a unique element $\tilde x\in\phi(\tilde X)$ mapped to
  elements as depicted on the right.
  \begin{equation}
    \label{eq:27}
    \begin{tikzcd}[column sep=huge]
      \phi(\tilde X)
      \ar[r,"\phi(h)"]
      \ar[d,"\beta_{\tilde X}"]
      &
      \phi(C')
      \ar[d,"\beta_{C'}"]
      \\
      \caa{X}(X,X)\ar[r,"{\caa{X}(\beta_{C'}(x'),1)}"]
      &
      \caa{X}(U(C'),X)
    \end{tikzcd}
    \qquad
    \begin{tikzcd}
      \tilde x
      \ar[r,mapsto]\ar[d,mapsto]
      &
      x'
      \ar[d,mapsto]
      \\
      1_X
      \ar[r,mapsto]
      &
      \beta_{C'}(x)
    \end{tikzcd}
  \end{equation}

  The first hypothesis yields a pullback as shown below, and therefore, a unique
  $v\colon \tilde X\to C$ in $\ca{C}_{X}$ such that $\pi_{\tilde X}(v)=\tilde x$.
  \begin{equation*}
    \label{eq:28}
    \begin{tikzcd}
      \ca{C}_X(\tilde X,C)\ar[r,"\pi_{\tilde X}"]
      \ar[d]
      &
      \phi(\tilde X)\ar[d,"\beta_{\tilde X}"]
      \\
      1\ar[r,"1_{X}"]
      &
      \caa{X}(X,X)
    \end{tikzcd}
  \end{equation*}
  By naturality of $\pi$ we have
  \begin{equation*}
    \label{eq:29}
    \phi(v)(x)=\phi(v)(\pi_C(1_C))=\pi_{\tilde X}(1_C\circ v)= \pi_{\tilde
      X}(v)=\tilde x,
  \end{equation*}
  so $v\circ h\colon C'\to \tilde X\to C$ satisfies
  $\phi(v\circ h)(x)=\phi(h)\phi(v)(x)=\phi(h)(\tilde x)=x'$.

  This deals with existence so it remains to show uniqueness. Given
  $g\colon C'\to C$ in $\ca{C}$ such that $\phi(g)(x)=x'$, we have to show
  $g=v\circ h$. To start with we notice that
  \begin{equation*}
    \label{eq:30}
    \beta_{C'}(x')=\beta_{C'}(\phi(g)(x))=\beta_{C}(x)\circ U(g) =U(g)
  \end{equation*}
  where we use the naturality of $\beta$ and the fact that $\beta_C(x)=1_X$ by
  definition of $x$. Therefore, $g=w\circ h$, where $h\colon C\to \tilde X$ is
  the cocartesian lifting of $\beta_{C'}(x')$ as above, and $w$ is a unique
  morphism in $\ca{C}_X$. It remains to show that $w=v$. Now $\pi_{\tilde X}$ is a
  monomorphism, as by Definition~\eqref{eq:23} it is the pullback of a monomorphism; so it suffices
  to show that
  \begin{equation}
    \pi_{\tilde X}(w)=\pi_{\tilde X}(v).\label{eq:33}
  \end{equation}
  Both sides of~\eqref{eq:33} have the same image under $\beta_{\tilde X}$,
  namely, $1_X$, by definition of $\pi$. Therefore, since the projections of a
  pullback are jointly monomophic, it suffices to prove that both
  sides of~\eqref{eq:33} have the same image under $\phi(h)$; the pullback
  involved here is that in~\eqref{eq:27}.

  The naturality of $\pi$ gives
  $\pi_{\tilde X}(w)=\phi(w)(\pi_C(1_C))=\phi(w)(x)$, so
  \begin{equation*}
    \label{eq:31}
    \phi(h)(\pi_{\tilde X}(w))=
    \phi(h)\phi(w)(x)=\phi(w\circ h)(x)=\phi(g)(x)=x'
  \end{equation*}
  and for the same reason $\phi(h)(\pi_{\tilde X}(v))=x'$. Therefore, $w=v$ and
  the proof is complete.
\end{proof}

Let us look at an illustrative example of Lemma~\ref{l:5}.
\begin{ex}
  \label{ex:2}
Consider the presheaf
$\operatorname{Cone}(-,D)$ that assigns to each object $C$ of $\mathcal{C}$ the
set of cones with vertex $C$ and with base the functor of small domain
$D\colon\mathcal{D}\to\ca{C}$.
Suppose
that $UD\colon\mathcal{D}\to\caa{X}$ has a limiting cone $\sigma_d\colon X\to
UD(d)$.
Define the natural transformation $\beta$ from the previous lemma with components
$\beta_C$ sending a cone $\xi_d\colon C\to D(d)$ to the unique $U(C)\to X$ that
composed with $\sigma_d$ equals $U\xi_d$.
Suppose that 
there is a cone $\tau_d\colon L\to D(d)$ with $P\tau_d=\sigma_d$
which is universal among those cones, as in condition \ref{item:5} in the lemma; that is, each cone $\rho_d\colon
C\to D(d)$ with $C\in\ca{C}_X$ is of the form $\rho_d=\tau_d h$ for a vertical
morphism $h$, which is unique among vertical morphisms.
The hypothesis \ref{item:6} of the lemma is automatically satisfied in this
example, something that can be verified using the definition of
cocartesian morphism.
Then, the converse part of the lemma's statement asserts that $\tau$ is a limiting cone for $D$.
\end{ex}

\subsection{An adjoint functor theorem}\label{sec:1}
\begin{thm}
  \label{thm:3}
  Suppose given an opfibred 1-cell $(K,F)$ as in~(\ref{eq:startingshape}) and
  $X\in\caa{X}$ a coreflection of $Y\in\caa{Y}$ along $F$ with counit
  $\varepsilon\colon FX\to Y$. The the following are equivalent for objects
  $C\in\ca{C}_X$ and
  $D\in\ca{D}_Y$.
  \begin{enumerate}
  \item \label{item:a}
    $C$ is a coreflection of $D$ along $K$ with
    counit $e\colon K(C)\to D$ such that $Ve=\varepsilon$.
  \item \label{item:b}
    $C$ is a coreflection of $D$ along the functor
    \begin{equation*}
      \ca{C}_{X}\xrightarrow{K}\ca{D}_{F(X)}\xrightarrow{\varepsilon_!}\ca{D}_Y.
    \end{equation*}
  \end{enumerate}
  In this case, $e=u\circ \tilde\varepsilon_C$, where $\tilde\varepsilon_C$ is a
  cocartesian lifting of $\varepsilon_C$, and $e\colon KC\to D$ and $u\colon
  \varepsilon_!KC\to D$ are the counits of the respective coreflections.
\end{thm}
\begin{proof}
  The proof is an application of Lemma~\ref{l:5}.
  Assuming that $\ca{D}(K-,D)$ is representable by $C$ with universal element
  $e\colon K(C)\to D$, the natural transformation $\beta$ constructed in the
  lemma is
  \begin{equation*}
    \beta\colon
    \ca{D}(K-,D)\xRightarrow{V} \caa{Y}(FU-,Y)\cong \caa{X}(U-,X),
    \qquad \beta_C(e)=1_X.
  \end{equation*}
  Then the lemma yields that the presheaf on $\ca{C}_X$ given by $C\mapsto
  \{f\in \ca{D}(K(C),D): Vf=\varepsilon\}$ is representable with universal
  element by $e$. This is the same as saying that the isomorphic presheaf
  $C\mapsto \ca{D}_Y(\varepsilon_!K(C),D)$ is representable with univesal
  element the vertical component in the cocartesian-vertical factorization of
  $e$.

  Now assume \ref{item:b} and set $\phi=\ca{D}(K-,D)$ and
  \begin{equation*}
    \beta\colon \ca{D}(K-,D)\xRightarrow{V}\caa{X}(FU-,Y)\cong\caa{X}(U-,X).
  \end{equation*}
  If $h\colon C''\to C'$ is a cocartesian morphism in $\ca{C}$, then $K(h)$ is a
  cocartesian morphism in $\ca{D}$, which is the same as asserting that the
  following is a pullback square. Then, the second hypothesis in Lemma~\ref{l:5}
  is satisfied:
  \begin{equation*}
    \begin{tikzcd}[column sep = large]
      \ca{D}(K(C'),D)
      \ar[d,"V"]\ar[r,"{\ca{D}(Kh,1)}"]
      &
      \ca{D}(K(C''),D)
      \ar[d,"V"]
      \\
      \caa{Y}(VK(C''),V(D))
      \ar[r,"{\caa{Y}(VKh,1)}"]
      &
      \caa{Y}(VK(C'),V(D))
    \end{tikzcd}
  \end{equation*}
  A choice of a cocartesian lifting
  $\tilde\varepsilon_{C'}\colon K(C')\to \varepsilon_!K(C')$ for each $C'\in
  \ca{C}_{X}$ gives rise to a natural transformation $\tilde\varepsilon\colon
  KJ\Rightarrow \varepsilon_!KJ\colon\ca{C}_X\to\ca{D}$ which makes the square
  on the right hand side below a pullback.
  \begin{equation*}
    \begin{tikzcd}
      \ca{D}_Y(\varepsilon_!KJ-,D)
      \ar[r]\ar[d]
      &
      \ca{D}(\varepsilon_!KJ-,D)
      \ar[r,"{\ca{D}(\tilde\varepsilon,1)}"]
      \ar[d,"V"]
      &
      \ca{D}(KJ,D)
      \ar[d,"V"]
      \\
      {1}
      \ar[r,"{1_{Y}}"]
      &
      \caa{Y}(Y,Y)
      \ar[r,"{\caa{Y}(\varepsilon,1)}"]
      &
      \caa{Y}(F(X),Y)
    \end{tikzcd}
  \end{equation*}
  The square on the left hand side is a pullback, by definition of the fibre
  $\ca{D}_Y$. The presheaf on $\ca{C}_X$ on the top left of the diagram
  is representable by hypothesis, so the outer diagram is the pullback square
  required in Lemma~\ref{l:5} (\ref{item:5}). So by the lemma, $\ca{D}(K-,D)$ is
  representable and the universal morphism
  $K(C)\to D$ of the representation is given by composition of the universal morphism
  $\varepsilon_!K(C)\to D$ with $\tilde\varepsilon_{C}\colon K(C)\to
  \varepsilon_!K(C)$.
\end{proof}

\begin{cor}\label{cor:4}
  Suppose given an opfibred 1-cell $(K,F)$ as in~(\ref{eq:startingshape}) and
  that $F$ has a right adjoint $G$ with counit $\varepsilon\colon FG\Rightarrow
  1_{\caa{Y}}$. The the following are equivalent.
  \begin{enumerate}
  \item \label{item:c} $K$ has a right adjoint $R$ such that:
    \begin{enumerate*}
    \item $UR(D)=GV(D)$ for all $D\in\ca{D}$;
    \item the counit $e\colon KR\Rightarrow 1_{\mathcal{D}}$ satisfies
      $Ve=\varepsilon U$.
    \end{enumerate*}
  \item \label{item:d}
    The functor
    \begin{equation}
      \label{eq:39}
      \ca{C}_{G(Y)}\xrightarrow{K}\ca{D}_{FG(Y)}\xrightarrow{\varepsilon_!}\ca{D}_Y
    \end{equation}
    has a right adjoint $R_Y$, for all $Y\in\caa{Y}$.
  \end{enumerate}
  In this case:
  \begin{enumerate}[label=(\alph*)]
  \item $R_Y$ is, up to unique isomorphism, the restriction of $R$ to fibres.
  \item \label{item:e}
    $e_D=u_D\circ \tilde \varepsilon_D$ where
    $\tilde\varepsilon_D\colon KR_Y(D)\to\varepsilon_!KR_Y(D)$ is the cocartesian
    lifting, and $e_D$ and $u_D$ are the components of the
    counit of $K\dashv R$ and $\varepsilon_!K\dashv R_Y$, respectively. 
  \item \label{item:f}
    $(K,F)\dashv(R,G)$ in $\Cat^{\mathbbm{2}}$.
  \end{enumerate}
\end{cor}
\begin{proof}
  The proof follows from Theorem~\ref{thm:3} and the fact that a functor
  has a right adjoint precisely when each object of its codomain category has a
  coreflection.   

  Assume~\ref{item:c}, so we have a coreflection  along
  $K$, namely $R(D)$,  in the fibre of $GV(D)$, for each $D\in\mathcal D$,
  providing the right adjoint $R$. The theorem provides us with a reflection of
  $D$ along~\eqref{eq:39}. Since this is for all $D\in \ca{D}_{V(D)}$ we showed
  \ref{item:d} in the case $Y=V(D)$. The special case when $D_Y$ is the empty
  category is trivial, since \eqref{eq:39} must be the identity functor.

  We next check that $R$ restricts to fibres. Given $t\colon D\to D'$ in
  $\ca{D}_Y$, we have $e_D'\circ KR(t)=t \circ e_D$, and applying $V$ we obtain
  $\varepsilon_{V(D')}\circ FUR(t)=V(t)\circ\varepsilon_{V(D)}$. Therefore
  $UR(t)=GV(t)$. But $V(t)=1$, so $R(t)$ is a vertical morphism. By construction
  $R_Y$ is the restriction of $R$ to fibres, so we have~\ref{item:e}. The
  equality $Ve=\varepsilon U$ says that $(e,\varepsilon)$ is a 2-cell in
  $\Cat^{\mathbbm{2}}$. This is sufficient to have an adjunction in this
  2-category as in~\ref{item:f} (the condition on the unit automatically holds).

  Now assume~\ref{item:d}. The previous theorem tells us that $R_Y(D)$ is a
  correflection of $D$ along $K$ with counit $e_D=u_D\circ \tilde\varepsilon_D$
  as in \ref{item:e}. This gives a right adjoint $R$ to $K$ with counit of
  components $e_D$, and we proved the statement~\ref{item:c}.
\end{proof}
Although not needed for what follows, for completeness purposes we state the
following theorem that adds to Corollary~\ref{cor:4} a necessary and sufficient
condition for the right adjoint to be an opfibred 1-cell. For a full proof, see
\cite[\S~5.3]{PhDChristina}.

\begin{thm}\label{totaladjointthm}
Suppose $(K,F)\colon U\to V$ is an opfibred 1-cell and $F\dashv G$ is an adjunction between the bases as in (\ref{eq:startingshape}).
Then, $(K,F)$ has an opfibred right adjoint $(R,G)$ if and only if, for
each $Y\in\caa{Y}$, there is an adjunction $(\varepsilon_Y)_!K_{GY}\dashv R_Y$ and the mate
\begin{equation*}
\begin{tikzcd}
\mathcal{D}_Y \arrow[r, "R_Y"] \arrow[d, "h_!"'] & \mathcal{C}_{GY} \arrow[d,
"(Gh)_!"]
\arrow[dl, Rightarrow, shorten >=1ex,shorten <=1ex,"\omega"']\\
\mathcal{D}_W \arrow[r, "R_W"']  & \mathcal{C}_{GW}
\end{tikzcd}
\end{equation*}
of $(\varepsilon_W)_!K_{GW}(Gh)_! \cong
(\varepsilon_W)_!(FGh)_!K_{GY} \cong
h_!(\varepsilon_Y)_!K_{GY}$
is an isomorphism.
\end{thm}

\subsection{Existence of a right adjoint between the base categories}

In the preceding section, we examined the conditions that allow the total category
component $K$ of an opfibred 1-cell $(K, F)$ to have a right adjoint, provided the
base space component F does. Here, we address the reverse inquiry: does $F$
possess a right adjoint if $K$ does?

In the next couple of lemmas we will use R.~Guitart's exact
squares~\cite{zbMATH03779585}, which we now briefly describe. Recall that any functor $F\colon\ca{A}\to\ca{B}$ gives rise to two profunctors, $F_*\colon\ca{A}\tickar\ca{B}$ given by
$\ca{B}(-,F-)$ and $F^*\colon\ca{B}\tickar\ca{A}$ given by $\ca{B}(F-,-)$. Moreover, it is the case that $F_*\dashv F^*$ in the bicategory of profunctors.

Suppose that $\ca{A},\ca{B},\ca{C},\ca{D}$ are categories, $T,W,Z,S$ functors and $\phi$ a natural transformation as depicted below.
Such a square filled with a natural transformation is called \emph{exact} if the morphism of profunctors
$W_*T^*\Rightarrow S^*Z_*$ that is the mate of $\varphi_*$ is invertible.
\begin{equation}
  \label{eq:21}
  \begin{tikzcd}
    \mathcal{A} \arrow[r, "T"] \arrow[d, "W"'] & \mathcal{B} \arrow[d, "Z"] \\
    \mathcal{C} \arrow[r, "S"] & \mathcal{D}
    \arrow[Rightarrow, from=2-1, to=1-2, shorten >= 1ex,shorten <=1ex,"\varphi"]
  \end{tikzcd}
\end{equation}
This can be translated in more elementary terms by stating that the morphisms
\begin{equation*}
  \int^{A\in\mathcal{A}}\mathcal{C}(C,W(A))\times\mathcal{B}(T(A),B)
  \to \mathcal{D}(S(C),Z(B)),
\end{equation*}
given by sending $f\colon C\to W(A)$ and $g\colon T(A)\to B$ to $Z(g)\varphi_A
S(f)$, are invertible.

In the case when there are adjunctions $W^\ell\dashv W$ and $Z^\ell\dashv Z$,
the mate $\varphi^\#\colon Z^\ell S\Rightarrow TW^\ell$ is given by
$$
\varphi^\#\colon
Z^\ell S\Rightarrow Z^\ell SWW^\ell \xrightarrow{Z^\ell \varphi T^\ell}
Z^\ell ZTW^\ell\Rightarrow TW^\ell
$$
where the unlabelled arrows are induced by the unit of $W^\ell\dashv W$ and the
counit of $Z^\ell \dashv Z$.
In this situation, (\ref{eq:21}) is exact if and only if $\varphi^\#$ is
invertible. For, $W_*\cong (W^\ell)^*$ and $Z_*\cong (Z^\ell)^*$, so the mate
of $\varphi_*$ is $(\varphi^\#)^*\colon (TW^\ell)^*\Rightarrow(Z^\ell S)^*$,
which is invertible if and only if $\varphi^\#$ is so.

A particular case of interest of an exact square is when $\ca{A}=\mathbf{1}$ in
\eqref{eq:21}. In this situation $W$ and $Z$ can be regarded as objects and
$\varphi$ has only one component
$\varphi\colon S(W)\to Z$. The exactness means that
$\ca{C}(-,W)\cong\ca{D}(S(-),Z)$ as presheaves, which is to say that $W$ is a
coreflection of $Z$ along $S$ with counit $\varphi$.

\begin{lem}
  \label{l:extsq-adj}
  Assume that the square~\eqref{eq:21} is exact.
  Then, if $A$ is a coreflection of $B$ along $T$, with
  counit $e\colon T(A)\to B$, then $W(A)$ is a coreflection of $Z(B)$ along $S$
  with counit $Z(e)\circ \varphi_B\colon SW(A)\to ZT(A)\to Z(B)$. In particular,
  $S$ has a right adjoint provided that $T$ does and that $Z$ is essentially
  surjective on objects.
\end{lem}
\begin{proof}
  As pointed out in the paragraph preceding this lemma, the coreflection
  $A$ of $B$ corresponds to an exact square, with identity top side and 2-cell
  $\varphi$. This square can be pasted with the square \eqref{eq:21}, giving
  rise to a pasted exact square (exact squares are closed under pasting
  \cite{zbMATH03779585}). Therefore we have another exact square with top side
  the identity functor of $\mathbf{1}$, which is another way of looking at the
  coreflection of the statement.

  The final claim in the statement becomes evident when $Z$ is surjective on
  objects: the presence of a right adjoint corresponds to the existence of a
  coreflection for every object in $\ca{D}$ , each of which is of the form
  $V(B)$ for some object $B$ in $\ca{B}$ . If $Z$ is merely essentially
  surjective, this scenario can be reduced to the previous case using standard
  methods.
\end{proof}

In what follows, the dual of \cite[Prop.~4.4]{Grayfibredandcofibred} will be
useful. It states that an opfibration has a left adjoint with identity unit if
and only if we can choose an initial object in each fibre, and the functors of
change of fibre preserve initial objects.
\begin{lem}
  \label{l:squareisexact}
  Suppose that the domain $U$ and the codomain $V$ in the opfibred 1-cell
  \eqref{eq:startingshape} have initial objects preserved under fibre change.
  If $K$ preserves initial
  objects in fibres,
  then  the square \eqref{eq:startingshape} is exact.
\end{lem}
\begin{proof}
  Denote by $U^\ell\dashv U$ and $V^\ell\dashv V$ the adjunctions given by the
  comments above this lemma, both with identity unit.
  We have to show that the mate of the
  identity transformation, namely
  $V^\ell F =V^\ell FU U^\ell=V^\ell VKU^\ell\Rightarrow KU^\ell$, is
  an isomorphism.
  In other words we have to show that the components $\beta_{KU^\ell(C)}\colon V^\ell
  VKU^{\ell}(C)\to KU^{\ell}(C)$ of
  the counit $\beta$ of $V^\ell \dashv V$ are invertible. These components are
  vertical morphisms due to the triangle identity $V\beta=1$.
  Recall that $U^\ell(C)$ is an initial object in the fibre over $C$.
  On the one hand, the
  domain of this morphism is an initial object of the fibre of $V$ over
  $VKU^\ell(C)=FUU^\ell(C)=F(C)$.  On the other hand, its codomain is an initial
  object in the fibre of $V$ over $F(C)$, since $K$ preserves these initial
  objects. Therefore, $\beta_{KU^\ell(C)}$ is invertible.
\end{proof}

\begin{thm}
  \label{thm:1}
  Suppose that, in an opfibred 1-cell $(K,F)\colon U\to V$
  as in \eqref{eq:startingshape}, $U$ and $V$ have initial objects preserved under
  fibre change and $K$ preserves initial objects on
  fibres.
  \begin{enumerate}
  \item \label{item:11}
    If $C\in \mathcal{C}$ is a coreflection along $K$ of an object $D\in\ca{D}$,
    with counit $e\colon K(C)\to D$,
    then $U(C)$ is a coreflection along $F$ of $V(D)$ with counit $V(e)$.
  \item \label{item:12}
    $F$ has a right adjoint $G$ whenever $K$ does so. Moreover, $GV\cong UR$ and
    $G\cong URV^\ell$ where $V^\ell\dashv V$ and $K\dashv R$.
  \end{enumerate}
\end{thm}
\begin{proof}
  The first part and the existence of a right adjoint $G$ of $F$ follow directly
  from Lemmas~\ref{l:extsq-adj} and \ref{l:squareisexact}, as $V$ is surjective
  on objects. So $G(Y)$ is defined by $URV^\ell(Y)$, for $Y\in\caa{Y}$.

  Let us first verify that $UR$ sends vertical morphisms to isomorphisms.
  If $v\colon D\to D'$ is a morphism in $\ca{D}$, then
  $e_{D'}\circ KR(v)=v\circ e_{D}$ where $e\colon KR\Rightarrow 1$ is the counit
  of $K\dashv R$. If $v$ is vertical, applying $V$ we obtain $V(e_{D'})\circ
  FUR(v)=V(e_D)$.
  But both $V(e_{D'})$ and $V(e_{D})$ exhibit, respectively, $UR(D)$ and
  $UR(D')$ as a coreflection of $V(D)=V(D')$ along $F$, by
  Lemma~\ref{l:extsq-adj}. Then $UR(v)$ is an isomorphism.

  Denote by $\beta\colon V^\ell V\Rightarrow 1_{\caa{Y}}$ the counit of the
  adjunction $V^\ell\dashv V$. The components $\beta_D\colon 0_{V(D)}\to D$ are
  the unique vertical morphisms from the initial objects of the fibres
  $\mathcal{D}_{V(D)}$. Then $GV=URV^\ell V\cong UR$ via $UR\beta$.
\end{proof}

Given the mild assumptions of this section, an opfibred 1-cell $(K,F)$ has a
right adjoint in the 2-category $\Cat^{\mathbbm{2}}$ provided $K$ has a
right adjoint.

\begin{cor}
  \label{cor:3}
  Suppose that in an opfibred 1-cell $(K,F)\colon U\to V$ as in
  \eqref{eq:startingshape}
  \begin{enumerate}
  \item 
    $U$ and $V$ have initial objects preserved under fibre change.
  \item \label{item:13}
    The restriction $\ca{C}_X\to\ca{D}_{F(X)}$ of $K$ to each fibre has a right
    adjoint $N_X$.
  \item \label{item:14}
    The change-of-base functors for the opfibration $U$ have
    right adjoints.
  \end{enumerate}
  Then, $K$ has a right adjoint if and only if $(K,F)$ has a right adjoint
  $(R,G)$ in $\Cat^{\mathbbm{2}}$. Moreover, the restriction of $R$
  to each fibre is isomorphic to
  \begin{equation*}
    \label{eq:36}
    \ca{D}_Y\xrightarrow{\varepsilon_Y^*}\ca{D}_{FG(Y)}
    \xrightarrow{N_{F(Y)}}\ca{C}_{G(Y)}
  \end{equation*}
  where $\varepsilon_Y\colon FG(Y)\to Y$ is the counit of $F\dashv G$,
  $Y\in\caa{Y}$, and $(\varepsilon_Y)_!\dashv\varepsilon_Y^*$.
\end{cor}
\begin{proof}
  The `only if' part of the statement is trivial, so we only treat the `if'
  part. 
  The restriction of $K$ to fibres preserves initial objects, as a left
  adjoint. Assuming that $K$ has a right adjoint, then Theorem~\ref{thm:1}
  gives a right adjoint of $F$, which together with the hypotheses \ref{item:13}
  and \ref{item:14} yields the required adjunction in
  $\Cat^{\mathbbm{2}}$, by Corollary~\ref{cor:4}.
\end{proof}
It is worth noting that the corollary constructs from $K\dashv R$ an adjunction
$(K,F)\dashv (R',G)$ where $R'$ need not be equal to $R$ (though they are of course isomorphic).

\section{Measuring comodules}\label{sec:enrichmentModComod}

In this section we give a definition of measuring comodule using the language of
the global categories of (co)modules. Using that these categories are (op)fibred
over the
categories of (co)monoids, we look at their local presentability and at a
natural monoidal
structure they support.

\subsection{Global categories of modules and comodules}\label{globalcats}

We begin by recalling a category of (co)modules that appeared in
\cite{Grothendieckcategoriesfibrees} or \cite[Example 1.10]{Grayfibredandcofibred}.
Suppose $\ca{V}$ is a monoidal category.

\begin{defi}\label{defComod}
The \emph{global category of comodules} $\Comod(\ca{V})$
is the category of all right $C$-comodules $X$ for
any comonoid $C$. We often write $X_C$ for clarity.
A morphism $k_g\colon X_C\to Y_D$
for $X$ a $C$-comodule and $Y$ a $D$-comodule
is a pair $(k,g)$ consisting of a comonoid morphism $g\colon C\to D$
and an arrow $k\colon X\to Y$ in $\ca{V}$ which makes
the diagram
\begin{equation}\label{eq:comodmap}
  \begin{tikzcd}[row sep=small]
    X \arrow[r, "\delta"] \arrow[d, "k\,"'] &
    X \otimes C \arrow[r, "X\otimes g"] &
    X \otimes D \arrow[d, "\, k\otimes D"] \\
    Y \arrow[rr, "\delta"] &&
    Y \otimes D
  \end{tikzcd}
\end{equation}
commute\footnote{Notice that the diagram can be simplified as $\delta\circ k=(k\otimes g)\circ\delta$, but we here choose to emphasize the opfibration structure explained below, and write this as a $D$-comodule morphism.}. Dually, the \emph{global category
of modules} $\Mod(\ca{V})$ has as objects all right
$A$-modules $M$ for any monoid $A$, and
morphisms are $p_f\colon M_A\to N_B$
where $f\colon A\to B$ is a monoid morphism
and $p\colon M\to N$ makes the dual diagram commute.
We will drop the monoidal category $\ca{V}$ from the notation when there is no
room for ambiguity and write simply $\Comod$ and $\Mod$.
\end{defi}

There are obvious forgetful functors
\begin{equation}\label{forgetGV}
 V\colon \Mod(\ca{V})\longrightarrow\Mon(\ca{V})\quad\textrm{and}\quad
U\colon \Comod(\ca{V})\longrightarrow\Comon(\ca{V})
\end{equation}
which map any module $M_A$ (resp.~comodule $X_C$) to its
monoid $A$ (resp.~comonoid $C$). In fact,
$V$ is a split fibration and $U$ is a
split opfibration.
The chosen cartesian and cocartesian liftings are
\begin{gather}\label{canoncart}
\Cart(f,N)=(1_{N},f)\colon f^*N\to N\textrm{ in }\Mod(\ca{V}) \\
\Cocart(g,X)=(1_{X},g)\colon X\xrightarrow{}g_!X\textrm{ in }\Comod(\ca{V})\notag
\end{gather}
where the module $f^*N$ is $N$ with the $A$-module structure $N\otimes A\to
N\otimes B\to N$ given by precomposing its $B$-module structure with
$(1_N\otimes f)$, and $g_!X$ has a dual description -- namely the first line of (\ref{eq:comodmap}).

The fibre over a comonoid $C$ is the category of $C$-comodules $\Comod_C$, and for a monoid $A$
it is $\Mod_A$. A morphism in $p_f\colon M_A\to N_B$ in $\Mod$ can be equivalently
described as a morphism $M\to f^*N$ in $\Mod_A$.

The opfibration $\Comod$, regarded as an indexed category over cocommutative
$\ringk{}$-coalgebras, was studied in detail in \cite{PareGrun}.

\begin{rmk}
  \label{rmk:1}
  The change-of-base functor $f_!\colon\Comod_C\to\Comod_D$
  induced by the comonoid morphism $f\colon C\to D$ has a right adjoint when
  $\Comod_C$ has equalizers. This is a standard fact,
  easily derived from Dubuc's Adjoint Triangle
  Theorem~\cite{AdjointTriangles} (see also the proof of Dubuc's theorem
  in~\cite[Lemma~2.1]{zbMATH05695332}, where the existence of equalizers in
  $\mathcal{A}$ should be added to the hypotheses).
\end{rmk}

For future reference, we note the following.

\begin{rmk}
  \label{rmk:2}
  The categories of (co)modules exhibit functorial behavior with respect to
  (op)lax monoidal functors, as described below. Given a lax monoidal functor
  $S\colon\ca{V}\to\ca{W}$, there are naturally induced functors $\Mon(S)\colon \Mon(\ca{V})\to\Mon(\ca{W})$ and
  $\Mod(S)\colon \Mod(\ca{V})\to\Mod(\ca{W})$ that ensure the commutativity of
  the following diagram:
  \begin{equation*}
    \label{eq:42} \begin{tikzcd}[column sep=large]
      \Mod(\ca{V})\ar[r,"\Mod(S)"]\ar[d]&\Mod({\ca{W}})\ar[d]\\
      \ca{V}\times\Mon(\ca{V})\ar[r,"S\times\Mon(S)"]&\ca{W}\times\Mon(\ca{W})
    \end{tikzcd}
  \end{equation*}
  The functor $\Mod(S)$ operates by
  mapping a right $A$-module $M\otimes A\to M$ to the composition
  $S(M)\otimes S(A)\to S(M\otimes A)\to S(M)$. Similarly, if $\tau\colon
  S\Rightarrow T$ is a monoidal natural transformation between lax monoidal
  functors, then $\tau_M\colon S(M)\to T(M)$ is a morphism $S(M)_{S(A)}\to
  T(M)_{T(A)}$ in $\Mod(\ca{W})$.

  Additionally, $\Mod(S)$ strictly preserves the
  cleavage given by the cartesian liftings \eqref{canoncart}:
  $$\Mod(S)\Cart(f,N)=\Mod(S)(1_N,f)=(1_{S(N)},S(f))=\Cart(S(f),S(N)).$$

  In the case of braided monoidal categories, it is not hard to show that
  $\Mod(S)$ is a lax monoidal functor if $S$ is a braided lax monoidal
  functor. The proof consists in showing that the components $S(M_A)\otimes
  S(N_B)\to S((M\otimes N)_{A\otimes B})$ of the lax monoidal structure of $S$,
  are morphisms over the monoid morphism $S(A)\otimes S(B)\to S(A\otimes B)$.
\end{rmk}

The following result is mentioned in \cite[Thm.~45]{LinearReps} for
the particular case of $\ca{V}=\Mod_R$ for a commutative ring $R$.

\begin{prop}\label{Comodcomonadic}
The functor $F\colon \Comod\to\ca{V}
\times\Comon(\ca{V})$ which maps an object
$X_C$ to the pair $(X,C)$ is comonadic.
\end{prop}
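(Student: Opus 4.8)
The plan is to apply Beck's monadicity theorem to the functor $F\colon\Comod\to\ca{V}\times\Comon(\ca{V})$. The three ingredients to verify are: that $F$ has a right adjoint; that $F$ creates (or at least preserves and reflects) equalizers of $F$-split pairs; and that these together with the comonad structure give the comonadic comparison as an equivalence. The existence of a right adjoint I would obtain by noting that both $\Comod$ and $\ca{V}\times\Comon(\ca{V})$ are cocomplete --- indeed locally presentable, using Propositions \ref{comodlocpresent} and \ref{Comonclosed} together with Theorem \ref{Monadiccomonadicpresentability} applied to the opfibration $V\colon\Comod\to\Comon(\ca{V})$ --- and that $F$ is cocontinuous, whence Theorem \ref{Kelly} supplies a right adjoint $R$. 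Explicitly, $R$ should send $(Y,D)$ to the cofree $D$-comodule $Y\otimes D$ over $Y$, regarded as an object over $D$; this is the composite of the cofree-comodule right adjoint in the fibre $\Comod_\ca{V}(D)$ with the section of $V$ picking out $D$.

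The heart of the argument is the creation of $F$-split equalizers (dually, coreflexive equalizers suffice by the precise version of Beck). Given a parallel pair $k_g,k'_{g'}\colon X_C\rightrightarrows Y_D$ in $\Comod$ whose image under $F$ --- that is, the pairs $(k,k')$ in $\ca{V}$ and $(g,g')$ in $\Comon(\ca{V})$ --- admits a split equalizer, I would first form the equalizer $e\colon E\to C$ of $g,g'$ in $\Comon(\ca{V})$ (which exists and is created from $\ca{V}$ by comonadicity of $\Comon(\ca{V})$ over $\ca{V}$, Proposition \ref{Comonclosed}), and the equalizer $p\colon Z\to X$ of $k,k'$ in $\ca{V}$. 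The split structure lets us transport the $C$-comodule structure $\delta\colon X\to X\otimes C$ along these equalizers to a unique coaction $Z\to Z\otimes E$, using that split equalizers are preserved by $-\otimes-$ in each variable and are absolute; one checks the comodule axioms by the universal property of $Z$, and that the resulting $Z_E$ is the equalizer of $k_g,k'_{g'}$ in $\Comod$, with $F$ sending it to the chosen split equalizer. Reflection of isomorphisms by $F$ is immediate, since a morphism $k_g$ in $\Comod$ is invertible precisely when $k$ and $g$ are.

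The main obstacle I anticipate is the bookkeeping in the equalizer-creation step: one must juggle the comonoid equalizer $E\to C$, the underlying $\ca{V}$-equalizer $Z\to X$, and the fact that the comodule coaction involves \emph{both}, so that the relevant ``split'' data has to be assembled diagonally in $\ca{V}\times\Comon(\ca{V})$ and then shown absolute enough to survive tensoring. An alternative, possibly cleaner route --- which I would mention --- is to factor $F$ as $\Comod\xrightarrow{} \Comod\text{-}\mathrm{over}\text{-}\ca{V}\times\Comon(\ca{V})$ by exhibiting $F$ as the composite of $V\colon\Comod\to\Comon(\ca{V})$ (a split opfibration whose fibres are comonadic over $\ca{V}$, hence $F$ is "fibrewise comonadic") with the projection, and invoke a descent/fibrewise-monadicity criterion: a functor that is comonadic on each fibre of an opfibration, with suitably compatible cofree objects, is comonadic on total categories. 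Either way the comonad on $\ca{V}\times\Comon(\ca{V})$ is $(Y,D)\mapsto(Y\otimes D, D)$ with the evident comultiplication built from $\Delta$ and the diagonal, and the comparison functor is the identity-on-objects assignment $X_C\mapsto (X\otimes C\text{-coaction})$, which the above shows is an equivalence.
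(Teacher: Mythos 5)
Your proposal is correct in outline but takes a genuinely different route from the paper. The paper's proof is a two-line direct argument: it exhibits the right adjoint explicitly as $R(A,D)=(A\otimes D)_D$ with coaction $1\otimes\Delta$ (the same cofree comodule you describe), and then simply observes that the induced comonad $(A,D)\mapsto(A\otimes D,D)$ on $\ca{V}\times\Comon(\ca{V})$ has $\Comod$ as its Eilenberg--Moore category by inspection --- the counit axiom forces the $\Comon(\ca{V})$-component of a coalgebra structure to be the identity, and what remains of the data is precisely a $D$-comodule structure on $A$. No appeal to Beck's theorem is needed. Your route via Beck's comonadicity theorem works, and the split-equalizer creation step you sketch (forming the equalizers of $g,g'$ in $\Comon(\ca{V})$ and of $k,k'$ in $\ca{V}$, then transporting the coaction using absoluteness of split equalizers under $\otimes$) is a standard and completable argument; it buys a reusable technique but costs considerably more bookkeeping than the direct identification. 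One caveat: your first suggested source for the right adjoint --- cocontinuity plus local presentability plus Theorem \ref{Kelly} --- imports hypotheses that the proposition does not assume (it is stated for an arbitrary monoidal $\ca{V}$), so you should rely on the explicit cofree construction, which you also give and which needs no such assumptions. The ``fibrewise comonadicity'' alternative you mention at the end is not a theorem you can cite off the shelf and would itself require proof.
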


\begin{proof}
Define a functor
$\ca{V}\times\Comon(\ca{V})\to\Comod$ sending an object $(X,D)$ to the
comodule $A\otimes D$ with $D$-comodule structure given by $1_A\otimes\Delta$,
where $\Delta$ is the comultiplication of $D$.
A morphism $(f,g)\colon (X,D)\to (Y,E)$ is sent to $(f\otimes g)_g$.
This establishes an adjunction $F\dashv R$.
The induced comonad $FR$ on $\ca{V}\times\Comon$,
given by $(X,D)\mapsto(X\otimes D,D)$, has $\Comod$ as its category of coalgebras.
\end{proof}

By the previous proposition, $\Comod(\ca{V})$ is cocomplete whenever $\ca{V}$ is
cocomplete.
Dually, $\Mod(\ca{V})$ is monadic over
the category $\ca{V}\times\Mon(\ca{V})$.
These facts are used to show the following.

\begin{prop}\label{ModComodlp}
If $\ca{V}$ is a locally presentable monoidal category and
$-\otimes-$ is accessible, then
$\Mod(\ca{V})$ and $\Comod(\ca{V})$ are locally presentable.
\end{prop}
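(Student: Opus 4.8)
The plan is to reduce the statement to the already-established machinery: Proposition~\ref{Comodcomonadic} realizes $\Comod$ as comonadic over $\ca{V}\times\Comon(\ca{V})$, and dually $\Mod$ as monadic over $\ca{V}\times\Mon(\ca{V})$, so the natural route is to invoke Theorem~\ref{Monadiccomonadicpresentability}. To do so I must verify two things: first, that the base categories $\ca{V}\times\Comon(\ca{V})$ and $\ca{V}\times\Mon(\ca{V})$ are locally presentable; second, that the relevant comonad $FR$ (respectively monad) is accessible.

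For the first point, $\ca{V}$ is locally presentable by hypothesis, and since $\otimes$ is finitary (hence accessible) in each variable, the discussion preceding Theorem~\ref{Monadiccomonadicpresentability}---citing \cite[\S~2]{MonComonBimon}---gives that $\Mon(\ca{V})$ and $\Comon(\ca{V})$ are locally presentable. A finite product of locally presentable categories is locally presentable, so $\ca{V}\times\Comon(\ca{V})$ and $\ca{V}\times\Mon(\ca{V})$ qualify.

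For the second point, I would examine the explicit comonad produced in the proof of Proposition~\ref{Comodcomonadic}: it sends $(A,D)$ to $(A\otimes D,D)$. This is built from the identity functor on the $\Comon(\ca{V})$-coordinate and from $(-\otimes D)$ on the $\ca{V}$-coordinate with $D$ ranging over comonoids; since $-\otimes-$ is finitary in each variable and colimits in a product are computed coordinatewise, the functor $FR$ preserves filtered colimits, i.e.\ is accessible (indeed finitary). The comonad structure maps are built from $\epsilon$ and $\Delta$ tensored with identities, so no further hypotheses are needed. Then Theorem~\ref{Monadiccomonadicpresentability} applied to the accessible comonad on the locally presentable category $\ca{V}\times\Comon(\ca{V})$ yields that $\Comod=(\ca{V}\times\Comon(\ca{V}))^{FR}$ is locally presentable. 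The argument for $\Mod$ is formally dual, using the monad $(M,A)\mapsto(A\otimes M,A)$, which is again finitary by the same coordinatewise computation, and the first bullet of Theorem~\ref{Monadiccomonadicpresentability}.

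The only genuinely delicate point is the accessibility of the (co)monad: one must be slightly careful that the functor $FR$ is accessible \emph{as a functor on the product}, not merely fibrewise, but this follows immediately because filtered colimits in $\ca{V}\times\Comon(\ca{V})$ are formed componentwise and the forgetful functor $\Comon(\ca{V})\to\ca{V}$ creates them (being comonadic over $\ca{V}$, by Proposition~\ref{Comonclosed} in the closed case, or more elementarily because the comonadic forgetful functor creates colimits). Everything else is bookkeeping, so the proof is short.
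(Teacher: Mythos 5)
Your proposal is correct and follows essentially the same route as the paper: both exhibit $\Mod$ and $\Comod$ as (co)monadic over $\ca{V}\times\Mon(\ca{V})$ and $\ca{V}\times\Comon(\ca{V})$ via Proposition~\ref{Comodcomonadic}, check that the (co)monad is finitary using componentwise filtered colimits and the fact that the forgetful functors to $\ca{V}$ create them, and then apply Theorem~\ref{Monadiccomonadicpresentability}. Your remark on accessibility of the comonad as a functor on the product is exactly the point the paper addresses with its explicit filtered-colimit computation, so nothing is missing.
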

\begin{proof}
  Monadicity (resp., comonadicity) of $\Mon(\ca{V})$ (resp., $\Comod(\ca{V})$) over
  $\ca{V}\times\Mon(\ca{V})$ (resp., $\ca{V}\times\Comon(\ca{V}))$, together
  with Theorem~\ref{Monadiccomonadicpresentability} and the comments above,
  yield the result.
\end{proof}

When $\ca{V}$ is braided monoidal, 
$\Comod(\ca{V})$ and $\Mod(\ca{V})$ are monoidal categories
as well: if $c$ is the braiding,
the object $X_C\otimes Y_D$ is a comodule over 
the comonoid $C\otimes D$ via the coaction
\begin{equation*}\label{coactioncomod}
X\otimes Y\xrightarrow{\delta_X\otimes\delta_Y}
X\otimes C\otimes Y\otimes D
\xrightarrow{1\otimes c_{C,Y}\otimes1} X\otimes Y\otimes C\otimes D. 
\end{equation*}
and similarly for $M_A\otimes N_B\in\Mod$. If $c$ is a symmetry, then $\Comod$ and $\Mod$ are symmetric as well.
Moreover, in that case the functors 
$V$ and $U$ of (\ref{forgetGV})
are braided strict monoidal.

As a first application of the general fibred adjunctions theory
of the previous section, we can deduce monoidal closedness of $\Comod(\ca{V})$
when $\ca{V}$ is locally presentable, braided and closed.

\begin{prop}\label{Comodclosed}
 If  $\ca{V}$ is a locally presentable braided
 monoidal closed category, the symmetric monoidal $\Comod(\ca{V})$
 is closed. Furthermore, the tensor-hom adjunction on $\Comod(\ca{V})$ is part of an
 adjunction in $\Cat^{\mathbbm{2}}$
 \begin{equation*}
   \label{eq:38}
   \begin{tikzcd}[column sep=0.8in]
     \Comod(\ca{V})
     \arrow[r, shift left=1.3ex, "{-\otimes X_C}"] \arrow[d,"U"']
     &
     \Comod(\ca{V}) \arrow[l, shift left=1.3ex, "{\overline{\HOM}(X_C,-)}"] \arrow[d, "U"] \\
     \Comon(\ca{V}) \arrow[r, shift left=1.3ex, "{-\otimes C}"] & \Comon(\ca{V}) \arrow[l, shift left=1.3ex, "{\HOM(C,-)}"] \\
     \arrow[phantom, from=1-1, to=1-2, "\bot"] \arrow[phantom, from=2-1, to=2-2, "\bot"]
   \end{tikzcd}
\end{equation*}
where $\HOM$ as in (\ref{def:HOM}) is the internal hom of $\Comon(\ca{V})$.
\end{prop}
Before giving the proof, we point out that
  the existence of the internal hom in $\Comod$ can be deduced from the local
  presentability of $\Comod$ in the following way which, however, does not
  describe it explicitly. If $X_C\in \Comod$, then $F(-\otimes X_C)= F(-)\otimes
  (X,C)$ where $F$ is the comonadic functor from
  Proposition~\ref{Comodcomonadic}. Then, $-\otimes X_C$ is a left adjoint by \cref{Kelly}, since
  $\Comod(\ca{V})$ is locally presentable by \cref{ModComodlp} and both $F$ and $-\otimes (X,C)$ are cocontinuous. The
  present proposition has the advantage of giving extra information.

\begin{proof}
First, we observe that there exists an opfibred 1-cell
\begin{equation}\label{otimesopfibred1cell}
\begin{tikzcd}[column sep=0.5in]
\Comod \arrow[r, "{(-\otimes X_C)}"] \arrow[d, "U"'] & 
\Comod \arrow[d, "U"] \\
\Comon \arrow[r, "{(-\otimes C)}"'] & 
\Comon
\end{tikzcd}
\end{equation}
Indeed, given a comonoid morphism $f\colon D\to E$ and a $D$-comodule $Y$,
$\Cocart(f\otimes 1_C,Y\otimes X)\colon Y\to (f\otimes 1_C)_!(Y\otimes X)$ is
equal to $\Cocart(f,Y)\otimes 1_{X_C}$. This means that the top horizontal
functor in~\eqref{otimesopfibred1cell} is a (strict) opfibred 1-cell of (split)
opfibrations.


By Proposition \ref{Comonclosed}, there is an adjunction
$(-\otimes C)\dashv\HOM(C,-)$
between the bases of (\ref{otimesopfibred1cell}).
Also, if $\varepsilon$ is its counit, the composite
\begin{equation*}
 \Comod_\ca{V}(\HOM(C,D))\xrightarrow{(-\otimes X_C)}
 \Comod_\ca{V}(\HOM(C,D)\otimes C)\xrightarrow{(\varepsilon_D)_!}
 \Comod_\ca{V}(D)
\end{equation*} 
has a right adjoint $\overline{\HOM}_D(X_C,-)$
by Theorem \ref{Kelly}. Indeed $\Comod_\ca{V}(\HOM(C,D))$
is locally presentable by Corollary~\ref{comodlocpresent},
reindexing functors preserve all colimits as discussed in \cref{ComonoidsComodules}, and the 
commutative diagram below
implies that $(-\otimes X_C)$
preserves all colimits, since the bottom arrow 
does (by monoidal closedness of $\ca{V}$),
and the vertical functors are comonadic.
\begin{equation*}
  \begin{tikzcd}[column sep = large]
    \Comod_{\ca{V}}(E) \arrow[r, "{-\otimes X_C}"] \arrow[d] & \Comod_{\ca{V}}(E\otimes C) \arrow[d] \\
    \ca{V} \arrow[r, "{-\otimes X}"] & \ca{V}
  \end{tikzcd}
\end{equation*}

By \cref{cor:4}, there is a functor
$\overline{\HOM}(X_C,-)\colon \Comod\to\Comod$, given on the fibres by
$\overline{\HOM}_D(X_C,-)\colon\Comod_{\ca{V}}(D)\to\Comod_{\ca{V}}(\HOM(C,D))$,
and
an adjunction  in $\Cat^\mathbbm{2}$ as in the statement.
\end{proof}


It is rare for the monoidal category of comodules to be closed when the category
of comonoids is not, as shown by the following proposition.

\begin{prop}
  Let $\ca{V}$ be a braided monoidal category with initial object. Then, if the
  monoidal category $\Comod(\ca{V})$ is left closed, then $\Comon(\ca{V})$ is
  left closed.
\end{prop}
\begin{proof}
  This is a straightforward application of Theorem~\ref{thm:1} to the case of an
  opfibred 1-cell $(-\otimes X_C,-\otimes C)$ as
  in~\eqref{otimesopfibred1cell}. The fibres of the opfibration
  $\Comod\to \Comon$ are the categories of comodules, all
  of which have initial objects preserved by change of fibre. The theorem then
  provides the right adjoint to $(-\otimes C)$.
\end{proof}

Turning to a different sort of hom-functor, if $\ca{V}$ is a braided monoidal closed category,
the induced
functors $[-,-]\colon \Comod_C^\op\times\Mod_A
\longrightarrow\Mod_{[C,A]}$ as in (\ref{defMod[]})
glue together into a functor
\begin{equation}\label{defbarH}
  [-,-]\colon \Comod^\op\times\Mod\longrightarrow \Mod
\end{equation}
that sends $(X_C,M_A)$ to the object $[X,M]$ equipped with the induced
action of the (convolution) monoid $[C,A]$.
In fact, applying Remark~\ref{rmk:2} to the lax monoidal functor
$[-,-]\colon\ca{V}^\op\times\ca{V}\to\ca{V}$, we have a 
fibred 1-cell
\begin{equation}\label{barHHfibred1cell}
\begin{tikzcd}[column sep=large]
\Comod^\op \times \Mod \ar[r, "{[-,-]}"] \ar[d, "U^\op \times V"'] & \Mod \ar[d, "V"] \\
\Comon^\op \times \Mon \ar[r, "{[-,-]}","(\ref{defMon[]})"'] & \Mon
\end{tikzcd}
\end{equation}

The commutativity of the square on the left-hand side below establishes that
$[-,N_B]^\op$  is cocontinuous when $\ca{V}$ is cocomplete: 
the comonadic functors at the left and right create all colimits
and both functors at the bottom
have right adjoints, $[-,N]^\op\dashv[-,N]$ for the internal hom in $\ca{V}$ and $[-,B]^\op\dashv P(-,B)$ by \cref{measuringcomonoidprop}.
Moreover,
the top horizontal functor is cocontinous on fibres, as the square on the
right-hand side commutes.
\begin{equation}\label{cocontbarHN}
\begin{tikzcd}[column sep=2.5cm]
  \Comod \ar[r, "{[-,N_B]}^\op"] \ar[d] &
  \Mod^\op \ar[d] \\
  \ca{V} \times \Comon \ar[r, "{[-,N]}^\op \times {[-,B]}^\op"] &
  \ca{V}^\op \times \Mon^\op
\end{tikzcd}
\qquad
\begin{tikzcd}
  \Comod_C\ar[r,"{[-,N_B]}"]\ar[d]&\Mod_{{[C,B]}}^\op\ar[d]\\
  \ca{V}\ar[r,"{[-,N]}"]&\ca{V}^\op
\end{tikzcd}
\end{equation}

\subsection{Measuring comodules}
\label{sec:measuring-comodules}

Let us start this section by recalling the definition of measuring comodule in
the context of vector spaces, due
to M.~Batcherlor~\cite{Batchelor}.
Suppose given a coalgebra $C$ together with a (right) $C$-comodule $X$, and a
pair of algebras $A$ and $B$ with respective (right) modules $M$ and $N$. For a
fixed measuring $\phi\colon A\otimes C\to B$, a linear map $\psi\colon M\otimes
X\to N$ is a \emph{measuring} if the following equality holds, for $m\in M$,
$a\in A$ and $x\in X$,
\begin{equation*}
  \label{eq:19}
  \psi((m\cdot a)\otimes x)=\sum \psi(m\otimes x_0)\cdot \phi(a\otimes x_1)
\end{equation*}
where the coaction $\chi\colon X\to X\otimes C$ is written in Sweedler's
notation as $\chi(x)=
\sum x_0\otimes x_1$.

The pair $(X,\psi)$ is called \emph{measuring comodule}. The equation above is
precisely the condition that makes the transpose map
$\bar{\psi}\colon M\to\Hom_\ringk{}(X,N)$ a morphism of $A$-modules.

The definition given above is equivalent to that in~\cite{Batchelor} except for the
fact that the latter expresses it in terms of maps $C\to \Hom_\ringk{}(A,B)$ and
$X\to\Hom_\ringk{}(M,N)$ and left (co)modules.

For its generalization, consider a \emph{braided} monoidal category
$\ca{V}$. For $M_A,N_B\in\Mod$ and $X_C\in\Comod$, we say that a pair of
morphisms $(\phi,\varphi)$, where
$\phi\colon A\otimes C\to B$ and $\varphi\colon M\otimes X\to N$ in $\ca{V}$, is
a \emph{module measuring morphism} from $M$ to $N$ if $\phi$ is a monoid measuring
morphism (\ref{measuring_mon}) and the
following diagram in $\mathcal{V}$ commutes, where $c$ denotes the braiding,
$\mu$ the respective module actions, and $\chi$ the coaction.
\begin{equation}
  \label{eq:18}
  \begin{tikzcd}[column sep=.7in]
    M\otimes{}A\otimes{}X \ar[r, "M\otimes{}A\otimes{}\chi"]
    \ar[d, "\mu\otimes{}X"'] & M\otimes{}A\otimes{}X\otimes{}C
    \ar[r, "M\otimes{}c_{A,X}\otimes{}C"] & M\otimes{}X\otimes{}A\otimes{}C \ar[d, "\varphi\otimes{}\phi"] \\
M\otimes{}X \ar[r, "\varphi"] & N & N\otimes{}B \ar[l, "\mu"']
\end{tikzcd}
\end{equation}

There are of course similar definitions for all the combinations of left or right
comodules and left or right modules. One of the reasons we have decided to work
with comodules and modules on the same side, right side in our case, is that the
the diagram~\eqref{eq:18} requires only one instance of the braiding.

\begin{ex}
  In the case $C=I=X$, $\psi\colon M\to N$ is just a morphism in \Mod\ over
  $\phi$. More interesting examples will be given in
  Sections~\ref{sec:deriv-meas-comod} and \ref{sec:higher-derivations}.
\end{ex}

\begin{lem}
  \label{l:1}
  When the braided monoidal category $\ca{V}$ is closed, module measurings as
  described above are in bijection with morphisms $M_A\to [X_C,N_B]_{[C,B]}$ in
  $\Mod$.  
\end{lem}
\begin{proof}
  A morphism $M_A\to [X_C,N_B]_{[C,B]}$ consists of a monoid morphism
  $\hat\phi \colon A\to[C,B]$ together with a $A$-module morphism $\hat\varphi\colon
  M\to\hat\phi^*[X,N]$ (see the comments before Remark~\ref{rmk:1}).
  These are in bijection with measurings $\phi\colon
  A\otimes C\to B$ together with morphisms $\varphi\colon M\otimes X\to N$ that
  make \eqref{eq:18} commutative.
\end{proof}

Given comodules $X_C$, $ X'_{C'}$ and modules $M_A$, $N_B$,
$ M'_{ A'}$, $ N'_{ B'}$ in a symmetric monoidal category and
measurings $(\psi,\phi)\colon X\otimes M\to N$ and
$(\psi',\phi')\colon X'\otimes M'\to N'$, we saw in Lemma~\ref{l:7}
how to construct a monoid measuring $\phi\bullet\phi'$ making $C\otimes
C'$ into a measuring comonoid from $A\otimes A'$ to $B\otimes B'$. In a similar
fashion we may consider
\begin{equation*}
  \psi\bullet\psi'\colon
  X\otimes X'\otimes M\otimes M'
  \cong
  X\otimes M\otimes X'\otimes M'
  \xrightarrow{\psi\otimes\psi'}N\otimes N'
\end{equation*}
where the isomorphism is the one induced by the braiding.
\begin{lem}
  \label{l:6}
In a symmetric monoidal category, the morphism $\psi\bullet\psi'$ is a module measuring.
\end{lem}
The proof of the lemma is straightforward. In the case of
a symmetric monoidal \emph{closed} category, the proof can be written in a few
lines. The transpose of the morphism $\psi\bullet\psi'$ under the adjunction
between $(-\otimes X\otimes X')$ and $[X\otimes X',-]$ in $\ca{V}$ is none other
than
\begin{equation*}
  \label{eq:50}
  M\otimes M'\xrightarrow{\hat\psi\otimes\hat\psi'}[X,N]\otimes[X',N']
  \to[X\otimes X',N\otimes N']
\end{equation*}
where the last morphism comes from the braided monoidal structure on $[-,-]$
induced by the symmetry, and, therefore, it is a module morphism.

\section{Universal measuring comodules}
\label{Universalmeasuringcomodule}
We begin this section by introducing universal measuring comodules in a monoidal category, and
comparing our definition with the original one in vector spaces from M. Batchelor~\cite{Batchelor} . Next, we
interpret module derivations through the lens of measuring comodules, laying the
groundwork for further exploration in Section~\ref{sec:higher-derivations}. We
then demonstrate that the existence of a universal measuring comodule for the
module pair $M_A$, $N_B$ implies the existence of a universal measuring comonoid
for $A$, $B$, requiring only a minimal condition on the base monoidal
category. Finally, we construct universal measuring comodules as adjoints to a
fibred 1-cell, leveraging results from Section~\ref{sec:fibrations}.

\subsection{The definition}\label{sec:2}

If $M_A$ and $N_B$ are modules,
\cite{Batchelor} defined a comodule which is universal amongst $C$-comodules for a
fixed measuring coalgebra from $A$ to $B$. In contrast, we leave the measuring
comonoid $C$ to vary freely. This is achieved by using the global categories of
(co)modules.

We start with a braided monoidal closed categoy $\ca{V}$.
\begin{defi}
\label{df:2}
We define an object $Q(M,N)$ in $\Comod(\ca{V})$,
the \emph{universal measuring comodule}, by an isomorphism
\begin{equation*}
\Comod(\ca{V})(X,Q(M,N))\cong\Mod(\ca{V})(M,[X,N])
\end{equation*}
natural in $X$, where $[X,N]$ is as in (\ref{defbarH}).
In other words, $Q(M,N)$ is a representing object of the functor assigning to
each comodule $X$ the set of module measurings $X\otimes M\to N$; see Lemma~\ref{l:1}.
\end{defi}

Let us compare Definition~\ref{df:2} with a straightforward translation of Batchelor's
definition~\cite{Batchelor} to a braided monoidal closed category. 
Given monoids $A$ and $B$ and a measuring comonoid $(C,\phi)$ with
$\phi\colon A\otimes C\to B$, denote by $\hat\phi\colon A\to [C,B]$ the
associated monoid morphism (\cref{unimeascom}). According to~\cite{Batchelor}, a \emph{universal $(C,\phi)$-measuring comodule} for the
$A$-module $M$ and the $B$-module $N$ is a measuring comodule
$(Q^\phi(M,N),\psi)$ that represents the functor
$\Mod_A(M,\hat\phi^*[-,N])$ from $\Comod_C^{\mathrm{op}}$ to
$\mathsf{Set}$,
\begin{equation*}
  \Comod_C(X,Q^\phi(M,N))\cong\Mod_A(M,\hat\phi^*[X,N])
\end{equation*}
In other words, it is a representation of the presheaf
$$X\mapsto\{\psi\colon
M\otimes X\to N: (\psi,\phi)\text{ is a module measuring}\}.$$
\begin{lem}
  \label{l:4}
  Let $M_A$ and $N_B$ be modules. If the universal measuring comonoid $\mu\colon
  A\otimes P(A,B)\to B$ exists, then $Q^\mu(M,N)\cong Q(M,N)$, one side existing
  if and only if the other does.
\end{lem}
\begin{proof}
  We apply \cref{thm:3} to the following opfibred 1-cell.
  \begin{equation}\label{eq:40}
    \begin{tikzcd}[column sep=.5in]
      \Comod \ar[r, "{[-,N_B]}^\op"] \ar[d] &
      \Mod^\op \ar[d] \\
      \Comon(\ca{V}) \ar[r, "{[-,B]}^\op"] &
      \Mon(\ca{V})^\op
    \end{tikzcd}
  \end{equation}
  The universal measuring comonoid $P(A,B)$ is a coreflection of $A$ along the
  functor at the bottom, with counit that, as a morphism in
  $\Mon(\ca{V})$, is the transpose $\hat\mu\colon A\to[P(A,B),B]$ of $\mu$.
  Given an $A$-module $M$, the universal meausiring
  comodule $Q(M,N)$ is precisely a coreflection of $M$ along
  $[-,N]^\op$. Similarly, $Q^\mu(M,N)$ is a coreflection of $M$ along
  $\mu^*[-,N]^\op\colon\Comod(P(A,B))\to \Mod(A)^{\op}$. The result now follows from \cref{thm:3}.
\end{proof}

\subsection{Derivations and measuring comodules}
\label{sec:deriv-meas-comod}

Prior to further exploring the abstract elements of measuring comodules, we
pause to discuss their connection to module derivations. To do this, let us first
revisit some key definitions.

Consider a $\ringk{}$-algebra morphism $f\colon A\to B$ and an $f$-derivation $\delta$,
such that $(\delta,f)\in\operatorname{Der}(A,B)$ as defined in \cref{unimeascom}. For any $M_A$ and $N_B$, we
can examine pairs $(h,D)$, where $h\colon M_A\to N_B$ is a module map over $f$,
and $D\colon M\to N$ is a linear map that satisfies
\begin{equation*} D(x\cdot a)=h(x)\cdot\delta(a)+D(x)\cdot f(a)\qquad x\in M,\
a\in A.
\end{equation*}
These
morphisms, which we refer to as module $h$-derivations, are commonly found in
the literature when $h$ is the identity, simplifying their defining identity to
$D(m\cdot a)=m\cdot\delta(a)+D(m)\cdot a$. We denote the set of all module
derivations by $\operatorname{MDer}(M_A,N_B)$, which comes with an obvious
projection function into $\operatorname{Der}(A,B)$.

\begin{ex}
  \label{ex:5}
  In this example we demonstrate that $Q(M_A,N_A)$ encapsulates all
  the details about module derivations, expanding on Example~\ref{ex:3}. We
  start with the coalgebra $C_1=\ringk{}\cdot g\oplus \ringk \cdot v$, where $g$ is a group-like
  element and $v$ is a $g$-primitive element. Viewing this coalgebra as a comodule
  over itself, we will show that
  \begin{equation*}
    \Comod(C_1,Q(M_A,N_A))\cong
    \operatorname{MDer}(M_A,N_A).
  \end{equation*}
  The left-hand side is naturally in one-to-one correspondence with module
  measurings $(\psi,\phi)\colon N_A\otimes (\ringk{}\cdot g\oplus \ringk{}\cdot v)\to M_B$. As
  observed in Example~\ref{ex:3}, $\phi$ corresponds to an element of
  $\operatorname{Der}(A,B)$, specifically the algebra map $\phi(-\otimes g)$ and
  the $\phi(-\otimes g)$-derivation $\phi(-\otimes v)$. The morphism
  $\psi(-\otimes g)$ is a module morphism $M_A\to N_B$ over the algebra map
  $\phi(-\otimes g)$. Additionally, the morphism $\psi(-\otimes v)$ satisfies
  \begin{equation*} \psi((x\cdot a)\otimes v)= \psi(x\otimes
    g)\cdot\phi(a\otimes v)+\psi(x\otimes v)\cdot\phi(a\otimes g) \qquad x\in M,\
    a\in A.
  \end{equation*}
  Therefore the pair $\psi(-\otimes v)$, $\psi(-\otimes g)$ together with
  $(\phi(-\otimes v),\phi(-\otimes g))\in\operatorname{Der}(A,B)$ constitute an
  element of $\operatorname{MDer}(M_A,N_B)$.
\end{ex}

\subsection{Measuring coalgebras from measuring comodules}
\label{sec:relat-betw-exist}
This section shows $Q(M_A,N_B)$ is a $P(A,B)$-comodule and, in particular, that this
universal measuring comonoid exists. In order to do so, we only require that
$\ca{V}$ should have an initial object.

\begin{lem}
  \label{l:2}
  Suppose that the braided monoidal closed category $\ca{V}$ has an initial
  object. If the universal measuring comodule $Q(M,N)$ exists for an $A$-module
  $M$ and a $B$-module $N$, then the universal measuring comonoid $P(A,B)$
  exists. Furthermore, given a choice of $Q(M,N)$, its underlying comonoid
  satisfies the universal property of $P(A,B)$.
\end{lem}
\begin{proof}
  We begin by employing Lemma~\ref{l:squareisexact} to verify that
  \eqref{eq:40} is an exact square.
  Both vertical arrows in the square are opfibrations with initial objects
  preserved under fibre change.
  The opfibred 1-cell in the diagram preserves initial objects: the initial object in
  $\Comod_C$ is the initial object $0\in \ca{V}$ with its unique $C$-comodule
  structure, and $[0,N]\cong 1$ is initial in $\Mod_{[C,B]}^\op$.

  Lemma~\ref{l:extsq-adj} completes the proof, since $Q(M,N)$ is a coreflection
  of $N$ along the top of the square~\eqref{eq:40}, while $P(A,B)$ is a
  coreflection of $V(M)=A$ along the bottom of the same square. The lemma tells
  us that this coreflection can be constructed as $U(Q(M,N))$, in other words,
  the underlying comonoid of $Q(M,N)$.
\end{proof}

\subsection{The universal measuring comodule as an adjoint functor}
\label{sec:univ-meas-comod}

The present section applies the adjointness results from Section~\ref{sec:1} to
prove the existence of the universal measuring comodule.
Our central hypothesis will be the local presentability of the base braided
monoidal closed category. The existence of a right adjoint $Q(-,N_B)$ for
$[-,N_B]$ can easily be derived from the cocontinuity of the latter functor (see
the text above~\eqref{cocontbarHN}) and the local presentability of $\Comod$
(Proposition~\ref{ModComodlp}). Since the comonoid part of $Q(M_A,N_B)$ is
isomorphic to any choice of universal measuring comonoid $P(A,B)$
(Lemma~\ref{l:2}), naturally in $M_A$, there is a functor isomorphic to
$Q(-,N_B)$ that lies above $P(-,B)$. The following proposition, however,
constructs $Q$ in a way that provides us with more information.

\begin{prop}\label{propmeasuringcomodule}
  Let $\ca{V}$ be a locally presentable braided monoidal closed
  category. Then, \eqref{eq:40} has a right adjoint in the 2-category
  $\Cat^{\mathbbm{2}}$, given by $Q(-,N_B)$ on the total categories and by
  $P(-,B)$ on the base categories.
  \begin{displaymath}
    \begin{tikzcd}[row sep=0.5in, column sep=0.8in]
      \Comod \arrow[r, "{[-,N_B]}^\op", shift left=1.5ex] \arrow[d] 
      \arrow[r, phantom, "\bot"] &
      \Mod^\op \arrow[l, "{Q(-,N_B)}", shift left=1.5ex] 
      \arrow[d] \\
      \Comon \arrow[r, "{[-,B]}^\op", shift left=1.5ex] 
      \arrow[r, phantom, "\bot"] &
      \Mon^\op \arrow[l, "{P(-,B)}", shift left=1.5ex]
    \end{tikzcd}
  \end{displaymath}
  In particular, $Q(M_A,N_B)$ is a
  $P(A,B)$-comodule and the comonoid component of the universal module measuring
  $M\otimes Q(M_A,N_B)\to N$ is the universal measuring $A\otimes P(A,B)\to B$.
\end{prop}
\begin{proof}
  The proof is an application of Corollary~\ref{cor:4} to the opfibred
  1-cell~\eqref{eq:40}.
  First of all, Theorem~\ref{measuringcomonoidprop} gives a right adjoint $P(-,B)$ to the
  bottom functor $[-,B]^\op$.
  Moreover, the composite functor
\begin{equation*}
\Comod_\ca{V}(P(A,B))\xrightarrow{[-,N_B]^\op}
\Mod_\ca{V}^\op([P(A,B),B])\xrightarrow{(\varepsilon_A)_!}
\Mod_\ca{V}^\op(A)
\end{equation*}
where $\varepsilon_A\colon [P(A,B),B]\to A$ in $\Mon(\ca{V})^\op$ is the counit
of $[-,B]^\op\dashv P(-,B)$, has a right adjoint by Theorem~\ref{Kelly}:
$\Comod_\ca{V}(C)$ is a locally presentable category by
Corollary~\ref{comodlocpresent}, the reindexing functors are always
cocontinuous as seen in Section~\ref{ComonoidsComodules},
and the fibrewise $[-,N_B]^\op$ is also cocontinuous, as remarked above (\ref{cocontbarHN}).
We therefore obtain a right adjoint
\begin{equation*}
 Q(-,N_B)\colon \Mod_\ca{V}(A)^\op\longrightarrow
\Comod_\ca{V}(P(A,B))
\end{equation*}
and an adjunction in the 2-category $\Cat^{\mathbbm{2}}$ as depicted in
the statement.
\end{proof}

We have an induced functor of two variables
\begin{equation*}
 Q(-,-)\colon \Mod^\op\times\Mod\longrightarrow\Comod
\end{equation*}
called the \emph{universal measuring comodule functor},
which is the parametrized adjoint of $[-,-]^\op$.

\begin{lem}\label{hello2}
Suppose $A$ and $B$ are monoids in $\ca{V}$ regarded as regular
modules over themselves. Then there are natural
isomorphisms of $P(A,B)$-comodules
\begin{equation*}
[V,N] \otimes P(A,B)\cong Q(V\otimes A,N)
\end{equation*}
for any object $V$ in $\ca{V}$ and
$B$-module $N$.
In particular,
${A^{\circ}}_{A^{\circ}}\cong Q(A,I)_{A^{\circ}}$,
where $A^{\circ}=P(A,I)$ is the Sweedler dual comonoid.
\end{lem}

\begin{proof}
The diagram of the left adjoints below commutes,
as we already saw in~\eqref{cocontbarHN}.
\begin{equation*}\label{bigdiagadj}
  \begin{tikzcd}[row sep=1.1cm, column sep=1.3in]
    \Comod \arrow[r, "{[-,N_B]}^\mathrm{op}", shift left=2ex]
    \arrow[r, phantom, "\bot"]
    \arrow[d, shift right=2ex]
    \arrow[d, phantom, "\dashv"] &
    \Mod^\mathrm{op} \arrow[l, "{Q(-,N_B)}", shift left=2ex]
    \arrow[d, shift right=2ex]
    \arrow[d, phantom, "\dashv"] \\
    \ca{V}\times\Comon
    \arrow[r, "{[-,N]}^\mathrm{op}\times {[-,B]}^\op", shift left=2ex]
    \arrow[r, phantom, "\bot"]
    \arrow[u, shift right=2ex] &
    \ca{V}^\op\times \Mon^\op
    \arrow[l, "{[-,N]}\times {P(-,B)}", shift left=2ex]
    \arrow[u, shift right=2ex]
  \end{tikzcd}
\end{equation*}
Therefore the corresponding square of right adjoints
commutes up to isomorphism. Given a monoid
$A$ and an object $V$, the right-left composition of right adjoints has the
effect $(V,A)\mapsto(V\otimes A,A)\mapsto Q((V\otimes A)_A,N_B)$. On the other
hand, the left-top composition does $(V,A)\mapsto([V,N],P(A,B))\mapsto
[V,N]\otimes P(A,B)$. This yields the natural isomorphism of the satement.
In the case of $V=N=B=I$ we get
the particular case of the Sweedler dual.
\end{proof}

\section{Enrichment of modules in comodules}\label{sec:enrichmentofmodincomod}

Similarly to how Theorem~\ref{monoidenrichment} established the enrichment of
monoids in comonoids in \cite[\S 5]{Measuringcomonoid}, we will now use
universal measuring comodules and the theory of actions of monoidal categories
to enrich the global category of modules in the global category of
comodules.

In the following commutative diagram, the functor at the bottom is an action of
$\ca{V}\times\Comon$ on $\ca{V}^\op\times\Mon^\op$. By
restricting this action along the
strict monoidal functor $\Comod\to\ca{V}\times\Comon$, we obtain an
action of $\Comod$ on $\ca{V}^\op\times\Mon^\op$.
\begin{equation}\label{barHopaction}
  \begin{tikzcd}[row sep=small,column sep=70pt]
    \Comod\times\Mod^\op\ar[r,"{[-,-]^\op}"]\ar[d]&\Mod^\op\ar[d]\\
    \ca{V}\times\ca{V}^\op \times\Comon\times\Mon^\op
    \ar[r,"{[-,-]^\op\times[-,-]^\op}"]&
    \ca{V}^\op\times\Mon^\op
  \end{tikzcd}
\end{equation}

\begin{prop}
  \label{prop:1}
Let $\ca{V}$ be a braided monoidal closed category. The monoidal category
$\Comod$ acts on $\Mod^\op$ via the functor at the top of the diagram above.
This action is strictly preserved by the forgetful functor on the right.
Moreover, this action is opmonoidal if $\ca{V}$ is symmetric.
\end{prop}
\begin{proof}
  The proof is straightforward and based on Remark~\ref{rmk:2} and the fact that
  the braiding endows both $\otimes$ and $[-,-]$ with lax monoidal structures
  such that the canonical isomorphism
  $\sigma_{X,Y,Z}\colon [X\otimes Y,Z]\cong[X,[Y,Z]]$ is monoidally natural.

  We can regard comodules $X_C$ and $Y_D$, and a module $M_A$, as a single module
  $(X,Y,M)_{(C,D,A)}$ in the monoidal category
  $(\ca{V}^\op)^2\times\ca{V}$. Applying the lax monoidal functors
  $[-\otimes-,-]$ and $[-[-,-]]$ we obtain modules $[X\otimes Y,M]_{[C\otimes
    D,A]}$ and $[X,[Y,M]]_{[C,[D,A]]}$, which are the domain and codomain of a
  morphism $\sigma_{X,Y,M}$ in $\Mod$.

  In a similar fashion, $[I,-]\cong 1_{\ca{V}}$ is a monoidal natural
  transformation, which gives a natural transformation $[I,M_A]\cong M_A$ in
  $\Mod$.

  It remains to be shown that these two natural isomorphisms satisfy the axioms
  for an action of the monoidal category $\Comod^{\op}$ on $\Mod$. Fortunately,
  the diagrams required to commute in $\Mod$ do so after applying the forgetful
  functor to $\ca{V}\times\Mon$, sparing us further effort.

  Finally, $[-,-]\colon \ca{V}\times\ca{V}^\op\to\ca{V}^\op$ is a \emph{braided} opmonoidal
  functor when $\ca{V}$ is symmetric.
  The dual of the last paragraph of Remark~\ref{rmk:2} tells us that the functor
  induced by $[-,-]$ on categories of comodules is oplax monoidal, with a
  structure preserved by the forgetful functor on the right of~\eqref{barHopaction}.
\end{proof}

The theory of enrichment induced by actions, as outlined in
\cref{sec:actionenrich}, can now be applied to yield the following
outcome.

\begin{thm}\label{ModenrichedinComod}
Let $\ca{V}$ be a locally  presentable
symmetric monoidal closed category.
Then $\Mod$ is a tensored and cotensored $\Comod$-enriched
symmetric monoidal
category $\underline{\Mod}$ with hom-objects $\underline{\Mod}(M_A,N_B)$ given by $Q(M,N)_{P(A,B)}$
and cotensor products of $N_B$ by $X_C$ given by $[X,N]_{[C,B]}$.
\end{thm}
\begin{proof}
  First, we can deduce that there is a $\Comod$-category, say $\mathcal{M}$,
  with underlying category $\Mod^{\op}$. Indeed, $\Comod$ acts on $\Mod^\op$ via $[-,-]$, as
  seen in Proposition~\ref{prop:1}. Furthermore, each functor $[-,N_B]$ from
  $\Comod$ to $\Mod^\op$ has a
  right adjoint, by \cref{propmeasuringcomodule}, yielding an enriched
  category $\mathcal{M}$ with underlying category $\Mod^\op$, enriched homs
  $\mathcal{M}(M_A,N_B)=Q(N_B,M_A)$ and tensor product of $X_C$ by $M_A$ given
  by $[X_C,M_A]$; see Theorem~\ref{actionenrich}.
  Then, $\mathcal{M}^\op$ is the sought for enrichment of $\Mod$ to a
  $\Comod$-category. Its contensor products are the tensor products of
  $\mathcal{M}$.
  It only remains to deal with the assertion about tensor products.

  We want to show that $\mathcal{M}^\op$ has tensor products, which is to say
  that $\mathcal{M}$ has cotensor products. For this to be the case, it is
  enough for each action endofunctor $[X_C,-]$ of $\Mod^\op$ to have a right
  adjoint, by Theorem~\ref{actionenrich}.

  First recall that as mentioned
  in~\eqref{barHHfibred1cell}, there is a fibred 1-cell
  \begin{displaymath}
    \begin{tikzcd}[row sep=15pt]
      \Mod\ar[d]\ar[r,"{[X_C,-]}"]&\Mod\ar[d]\\
      \Mon(\ca{V})\ar[r,"{[C,-]}"]&\Mon(\ca{V})
    \end{tikzcd}
  \end{displaymath}
  We know that the bottom functor has a left adjoint $C\triangleright-$ (\cref{monoidenrichment}), with
  unit that we will denote by $\eta$. Therefore by the dual of \cref{cor:4}, the top functor of   
  the diagram has a left adjoint if, for any monoid $A$, the top composition in the
  diagram below has a left adjoint:
  \begin{displaymath}
    \begin{tikzcd}[column sep=.5in,row sep=small]
      \Mod({C\triangleright A})\ar[rr,"{[X_C,-]_{C\triangleright A}}"]\ar[d] &&
      \Mod({[C,C\triangleright A]})
      \ar[r,"{\eta^*_A}"]\ar[d] & \Mod(A)\ar[dl] \\
      \ca{V}\ar[rr,"{[X,-]}"] && \ca{V} &
    \end{tikzcd}
  \end{displaymath}
  The existence of the left adjoint $X_C\triangleright-$ of the top composition
  follows from the Adjoint Triangle Theorem, since the vertical and diagonal
  functors are monadic, and the bottom functor is a right adjoint.
  We thus obtain an adjunction in
  $\Cat^{\mathbbm{2}}$, and in particular $X_C\triangleright M_A$ is a
  module over $C\triangleright A$.
  The assertion about the enriched symmetric monoidality of $\Mod$ follows from
  Theorem~\ref{opmonactionmonenrich} and Proposition~\ref{prop:1}.
\end{proof}

The construction in the proof gives us a functor on the top of the following
diagram, lifting the Sweedler product (i.e. the tensor product of $\Mon$).
\begin{displaymath}
  \begin{tikzcd}
    \Comod\times\Mod\ar[r,"\triangleright"]\ar[d]&\Mod\ar[d]\\
    \Comon\times\Mon\ar[r,"\triangleright"]&\Mon
  \end{tikzcd}
\end{displaymath}

\begin{rmk}
It is the case that $\Mod\to\Mon$ is an \emph{enriched fibration} over the \emph{monoidal opfibration} $\Comod\to\Comon$; we here omit the details regarding such structures and we refer the interested reader to \cite[\S~4.1]{EnrichedFibration}.
\end{rmk}

\begin{cor}
  In the situation of the previous theorem,
  the universal measuring comodule functor
  $Q\colon \Mod^\mathrm{op}\times\Mod\to\Comod$ is a braided lax monoidal
  functor.
\end{cor}
\begin{proof}
  If $\ca{A}$ is
  a monoidal $\ca{C}$-category, where $\ca{C}$ is a (braided) monoidal category,
  then the hom functor $\ca{A}(-,-)\colon\ca{A}_\circ^\op\times\ca{A}_\circ\to\ca{C}$
  has a canonical (braided) lax monoidal structure given by the effect of the
  tensor product on homs: $\ca{A}(A,B)\otimes\ca{A}(A',B')\to\ca{A}(A\otimes
  A',B\otimes B')$. In the braided case, $\ca{A}^\op$ is equipped with the
  braiding given by the inverse of the braiding of $\ca{A}$.
\end{proof}

\begin{ex}
  \label{ex:4}
  The enrichment of modules in comodules described above induces, via changing
  the base of enrichment, an ordinary category whose morphisms $M_A\to N_B$ are
  module derivations. We refer to Example~\ref{ex:5}, where we exhibited an
  isomorphism between
  $\Comod(C_1,Q(M,N))$ and $\operatorname{MDer}(M,N)$.
  The right hand side of the isomorphism can be made into
  a functor $\Mod^\op\times\Mod\to\mathsf{Set}$ in an obvious way that makes the
  isomorphisms into a natural transformation.

  The coalgebra $C_1=\ringk{}\cdot g\oplus \ringk{}\cdot v$, where $g$ and $v$ are,
  respectively, a group-like and a $g$-primitive element, possesses the property of
  inducing a comonoid in $\Comod$. To see this, first note that $C_1$ is
  cocommutative, which implies that it gives rise to a cocommutative comonoid in
  $\Coalg{}$.  Moreover, the comultiplication
  $\Delta\colon C_1\to C_1\otimes C_1$ is a comodule morphism over the coalgebra
  morphism $\Delta$, and the counit $\varepsilon\colon C_1\to \ringk{}$ is a comodule
  morphism over $\varepsilon$. Thus, $C_1$ is a
  cocommutative comonoid in $\Comod$, which makes
  $\Comod(C_1,-)\colon\Comod\to\mathsf{Set}$ a braided lax monoidal functor. By
  change of base of enrichment along this functor we obtain a
  $\mathsf{Set}$-category whose objects are modules and whose hom-set from $M$
  to $N$ is $\operatorname{MDer}(M,N)$.
  For a generalisation of this example see Section~\ref{sec:higher-derivations}.
\end{ex}

\section{The coinvariants of the universal measuring comodule}
\label{sec:coinv-univ-meas}

This section explains the construction of coinvariants for the universal
measuring comodule associated with a pair of modules.
Coinvariants are central in the theory of quotients of algebraic
groups, where, if a quotient of an affine algebraic group $G$ by the action of a closed subgroup $H$
exists, then its algebra of functions $\mathcal O(G/H)$ is the space of coinvariants
of the $\mathcal O(H)$-comodule $\mathcal O(G)$ (see \cite{zbMATH06713849} Prop.~B.28 and proof of
step 2 in p.~598 for a more general statement). More generally, coinvariants
are central in the theory of Hopf algebras
\cite{zbMATH05906466,zbMATH00482792}.

Assume that $u\colon I \to C$ is a comonoid morphism within the category
$\ca{V}$. When $\ca{V}$ is the category of vector spaces, this morphism
corresponds to a group-like element in $C$. For the purposes of this section,
the braided monoidal closed category $\ca{V}$ must, at a minimum, possess
equalizers for coreflexive pairs.

If $X\in\Comod_C$, the object of coinvariants of $X$ is an object of $\ca{V}$ that we denote by $u^*X$: it represents the functor
$\Comod_C(u_!(-),X)$ from $\ca{V}^{\mathrm{op}}\cong\Comod_I^{\mathrm{op}}$ to
$\mathsf{Set}$. The existence of $u^*X$ is equivalent to the
existence of the equalizer of the coaction $X\to X\otimes C$ and $1_X\otimes u$,
with $u^*X$ being precisely this equalizer. The existence of coinvariants for
each $C$-comodule is equivalent to the existence of a right adjoint
$u^*\colon\Comod_C\to\Comod_I\cong\ca{V}$ to the change of fibre functor $u_!$.

Building towards the following proposition, it is well-known that $\Mod_A$ has a canonical
structure of a $\ca{V}$-category; this can be easily verified, or one may appeal to the fact that $\Mod_A$ is the
category of Eilenberg-Moore algebras for a $\ca{V}$-enriched monad;
see~\cite{zbMATH03264971}, \cite{zbMATH03360361} and
\cite[Thm.~15]{FormalTheoryMonadsI}. The enriched hom objects can be constructed
by means of coreflexive equalizers. One way of characterizing, up to isomorphism,
the $\ca{V}$-enriched hom from $M$ to $N$ is as a representing object of
the presheaf $\Mod_A(M,[-,N])\colon\ca{V}^{\mathrm{op}}\to\mathsf{Set}$, where
the $A$-module structure on $[Z,N]$ is induced by that of $N$, for $Z\in\ca{V}$.

Before presenting our result, we must establish a piece of
notation. Assume that the universal measuring comonoid $P(A,B)$ for the pair of
monoids $A$ and $B$ exists. Denote by $\bar{f}\colon I \to P(A,B)$ the comonoid
morphism that corresponds to the monoid morphism $f\colon A \to B$, as
determined by the universal property of $P(A,B)$; see~\eqref{eq:12}.

\begin{prop}
  \label{prop:4}
  With the above notation, if $Q(M,N)$ exists for a pair of modules $M_A$ and
  $N_B$, then the object of coinvariants $\bar{f} ^* Q(M,N)$ is isomorphic to the 
  $\ca{V}$-enriched internal hom from $M$ to $f^*N$ of the $\ca{V}$-category
  $\Mod_A$.
\end{prop}
\begin{proof}
  By the comments above this proposition, we are to show that $\bar{f}^*Q(M,N)$
  represents the functor $\Mod_A(M,[-,f^*N])$.
  There is a bijection, natural in $Z\in\ca{V}$, between morphisms $Z\to \bar{f}
  ^*Q(M,N)$ and morphisms $Z\to Q(M,N)$ in $\Comod$ that lie over the
  morphism $\bar{f} \colon I\to P(A,B)$ in $\Comon(\ca{V})$. These
  morphisms are in natural bijection with morphisms $M\to [Z,N]$ in $\Mod$ that
  lie over $f\colon A\to B$ in $\Mon(\ca{V})$. We have, then, a natural
  bijection $\ca{V}(Z,\bar{f}^* Q(M,N))\cong\Mod_A(M,[Z,f^*N])$ as
  required.
\end{proof}

\section{Higher derivations}
\label{sec:higher-derivations}

In this section we explain how higher derivations of $\ringk{}$-algebras and of modules
are particular features of the enrichment of the category of $\ringk{}$-algebras over
the category of $\ringk{}$-coalgebras and the category of modules over comodules, respectively. We begin by giving
the definition of higher derivations, due to Hasse and
Schmidt~\cite{zbMATH03027033}. Modern references include
\cite{zbMATH00043569} and \cite{zbMATH05263295}. Even though these and most
references work with commutative rings, the definitions carry over to our
non-commutative setting. We refer the reader to the survey \cite{zbMATH05919911}.
We define the non-commutative version of the Hasse-Schmidt algebra, absent from
the literature as far as we know, by means of the Sweedler product, and show
its relationship to the bimodule of (K\"alher) differentials~\cite[Prop.~III.10.17]{zbMATH03440338}. We
conclude by applying our machinery to higher derivations of modules, first
introduced in~\cite{zbMATH03843957}.

\subsection{Higher derivations of algebras}\label{sec:higherdevs}
Let $\ringk{}$ be a commutative ring, and let $A$ and $B$ be $\ringk{}$-algebras. A
\emph{higher derivation}, or \emph{Hasse-Schmidt derivation, of length
$0\leq m\leq +\infty$} consists of $\ringk{}$-linear morphisms $D_k\colon A\to B$, for $0\leq k\leq m$, such
that $D_0$ is a $\ringk{}$-algebra morphism and
\begin{equation}
  \label{eq:higherderivation}
  D_k(xy)=\sum_{i+j=k}D_i(x)D_j(y)\qquad x,y\in A.
\end{equation}
These equalities alone imply that $D_0$ preserves the product, so the requirement
that $D_0$ be an algebra morphism only means that $D_0(1)=1$.
It can be shown by induction that $D_k(1)=0$ for $k\geq 1$.

As a particular instance, a higher derivation of length $1$ is just an ordinary
derivation $D_1\colon A\to B$ as recalled in \cref{unimeascom}, where $B$ is regarded as an $A$-bimodule via
$D_0\colon A\to B$.

Following \cite{zbMATH05263295}, we denote the set of higher derivations of length
$m$ from $A$ to $B$ by $\Der^m_\ringk{}(A,B)$. There is a bijection between this set and
$\mathsf{Alg}_\ringk{}(A,B[x]/(x^{m+1}))$ when $m<\infty$, or
$\mathsf{Alg}_\ringk{}(A,B\llbracket x\rrbracket )$ when $m=+\infty$, sending
$(D_i:1\leq i\leq m)$ to the morphism $a\mapsto \sum_{i=0}^mD_i(a)x^i$.

The assignment $B\mapsto\Der^m_\ringk{}(A,B)$ is a functor
$\mathsf{Alg}_\ringk{}\to\mathsf{Set}$ via postcomposition. There is a $\ringk{}$-algebra $\HS^m_{A/\ringk{}}$ that represents this
functor, called the \emph{Hasse-Schmidt algebra} of $A$. The construction for a
commutative $\ringk{}$-algebra $A$ can be found in \cite{zbMATH05263295}.
Below we exhibit how it is related to the theory of measurings.

\subsection{Higher derivations and measurings}
\label{sec:high-deriv-meas}

In this section, $\ringk{}$ will remain a commutative ring.
Let $C_m$ be the $\ringk{}$-coalgebra with basis $\{v_i:0\leq i\leq m\}$ for $1\leq
m\leq+\infty$, with comultiplication $\Delta (v_k)=\sum_{i+j=k}v_i\otimes v_j$
and counit $\varepsilon(v_k)=\delta_{0,k}$.
Notice that the comultiplication is cocommutative.

\begin{lem}\label{lem:thislemma}
  There are a natural bijections
  \begin{equation*}
    \Der_{\ringk{}}^m(A,B)\cong \Meas(A,C_m,B)\cong \Coalg_\ringk{}(C_m,P(A,B))
  \end{equation*}
  natural in the $\ringk{}$-algebras $A$ and $B$.
\end{lem}
\begin{proof}
  The second natural isomorphism arises from the definition of $P(A,B)$, so we
  only have to exhibit the first one.
  A measuring $\mu\colon A\otimes C_m\to B$ can be described as morphisms
  $D_k\colon A\to B$, where $D_k(a)=\mu(a\otimes v_k)$. The left-hand side axiom~\eqref{measuring_mon} of a
  measuring for $\mu$ translates to the equality~\eqref{eq:higherderivation}. The
  right-hand side one, expressing the compatibility with the units and the counit, translates to
  $D_0(1)=1$ and $D_k(1)=0$ for $k>0$. Therefore, the $(D_k)$ form a higher
  derivation, and in fact this construction is a bijection, since the equalities
  $D_k(1)=0$ for $k\geq 1$ follows from~\eqref{eq:higherderivation}.
\end{proof}

It can be directly verified that the convolution algebra $[C_m, B]$ is
isomorphic to $B[x]/(x^{m+1})$ when $m$ is finite and to $B\llbracket
x\rrbracket$ when $m = +\infty$. This establishes the classical isomorphism
between
$\Der^m_\ringk{}(A, B)$ and $\Alg_\ringk{}(A, B[x]/(x^{m+1}))$ for finite $m$, or $\Alg_\ringk{}(A,
B\llbracket x\rrbracket)$ for $m = +\infty$, as explained in the commutative
case, for example, in \cite[Lemma~1.7]{zbMATH05263295}.

\subsection{The category of derivations}
\label{sec:category-derivations}
In this section we show how our general theory of enrichment directly leads to a
category $\mathsf{Der}^m_\ringk{}$ whose objects are the $\ringk{}$-algebras and whose
hom-sets are the sets $\Der^m_\ringk{}(A,B)$ of $\ringk{}$-derivations of order $m$.
To see this, notice that $C_m$, being a cocommutative
coalgebra, is a comonoid in the monoidal category $\Coalg_\ringk{}$. Therefore, the
functor $\Coalg_\ringk{}(C_m,-)\colon\Coalg_\ringk{}\to\mathsf{Set}$ has an induced lax
monoidal structure. Change of base along this functor sends
$\Coalg_\ringk{}$-categories to $\mathsf{Set}$-categories, and in particular it sends
$\underline{\Alg}_\ringk{}$ to a category with the same objects and with homs
$\Coalg_\ringk{}(C_m,P(A,B))\cong\Der^m_\ringk{}(A,B)$ by \cref{lem:thislemma}.
The composition of derivations $D\colon A\to B$ and $E\colon B\to C$ in this category
has the explicit form
$$
(E\circ D)_k=\sum_{i+j=k}E_i\circ D_j
$$
while the identity morphism of $A$ is the higher derivation that is
$(1_A,0,0,\dots)$.
We emphasise that this description of $\mathsf{Der}^m_\ringk{}$, as well as the fact
that the category axioms are satisfied, arise for free from the description of
the coalgebra $C_m$ and the fact that $\underline{\Alg}_\ringk{}$ is a category
enriched in coalgebras (\cref{monoidenrichment}).
Explicitly, given $D$, and $E$, we have correponding measurings
$d\colon A\otimes C_m\to B$ and $e\colon A\otimes C_m\to B$, and their
correponding coalgebra maps $\bar d\colon C_m\to P(A,B)$ and $\bar e\colon
C_m\to P(B,C)$. The composition of higher derivations $E\circ D$ is the higher
derivation associated to the coalgebra map
\begin{equation}
  \label{eq:5}
  C_m\xrightarrow{\Delta}C_m\otimes C_m\xrightarrow{\bar d\otimes \bar e}
  P(A,B)\otimes P(B,C)\to P(A,C)
\end{equation}
where we used that $C_m$ is commutative, so $\Delta$ is a coalgebra map, and the
last arrow is the composition of $\underline{\Alg}_\ringk{}$. The measuring
corresponding to \eqref{eq:5} is
\begin{equation*}
  A\otimes C_m\xrightarrow{1\otimes \Delta}A\otimes C_m\otimes C_m
  \xrightarrow{d\otimes 1}
  B\otimes C_m\xrightarrow{e}C
\end{equation*}
that sends $a\otimes v_k$ to $\sum_{i+j=k} e(d(a\otimes v_i)\otimes
v_j)=\sum_{i+j=k}E_jD_i(a)$.

There is a canonical identity-on-objects functor $\Alg_\ringk{}\to\mathsf{Der}^m_\ringk{}$ that
regards each morphism of algebras $f$ as a higher derivation
$(f,0,0,\dots)$. Furthermore, this functor has a retraction sending $(D_k)$ to
$D_0$. Indeed, the counit $\varepsilon\colon C_m\to \ringk{}$ has a section in $\Coalg_\ringk{} $ sending
$1$ to the group-like element $v_0$ of the basis of $C_m$. This gives rise to a retraction of monoidal natural
transformations $\Coalg_\ringk{}(C_m,-)\rightleftarrows\Coalg_\ringk{}(\ringk{},-)$, which in turn
induces $\Alg_\ringk{}\rightleftarrows\mathsf{Der}^m_\ringk{}$.

\subsection{The non-commutative Hasse-Schmidt algebra}
\label{sec:non-comm-hesse}

In this section we introduce the non-commutative version of the Hasse-Schmidt algebra of a $\ringk{}$-algebra $A$
in terms of the Sweedler product (namely the tensor of the enrichment
of \cref{monoidenrichment})
with
$C_m$. More precisely,
\begin{equation*}
  \HS_{A/\ringk{}}^m\cong C_m\triangleright A,\qquad 0\leq m\leq +\infty.
\end{equation*}
By definition of the Sweedler product, we have isomorphisms
\begin{equation}
  \label{eq:4}
  \Alg_\ringk{}(C_m\triangleright A,B)\cong \Coalg_\ringk{}(C_m,P(A,B))\cong\Der^m_\ringk{}(A,B)
\end{equation}
natural in $B$.

By the usual representability argument, there is an universal $m$-derivation
$(d^A_i\colon A\to \HS_{A/\ringk{}}^m;0\leq i\leq m)$ inducing
$\Alg_\ringk{}(\HS_{A/\ringk{}}^m,B)\cong\Der^m_\ringk{}(A,B)$ by composition, in the sense that for
any $m$-derivation $(D_i\colon A\to B:0\leq i\leq m)$ there exists a unique
algebra map $\HS_{A/\ringk{}}^m\to B$ such that $D_i=h\circ d^A_i$ for all $i$.
In particular, there is a commutative triangle as shown below, which we will
use later on.
\begin{equation}
  \label{eq:3}
  \begin{tikzcd}
    \Alg_\ringk{}(\HS_{A/\ringk{}}^m,B)\ar[dr,"(-\circ d_0^A)"']\ar[rr,"\cong"]&&
    \Der^m_\ringk{}(A,B)\ar[dl,"\mathrm{proj}"]\\
    &\Alg_\ringk{}(A,B)
  \end{tikzcd}
\end{equation}

The algebra map $d^A_0\colon A\to \HS^m_{A/\ringk{}}$ endows the Hasse-Schmidt algebra
of $\ringk{}\to A$ with an $A$-algebra structure.

\begin{lem}
  \label{l:HS-and-tensor}
  For any coalgebra $C$ and algebra $A$ over the commutative ring $\ringk{}$, there is
  a natural isomorphism
  $C\triangleright\HS^m_{A/\ringk{}}\cong \HS^m_{C\triangleright A/\ringk{}}$, for $0\leq
  m\leq+\infty$.
\end{lem}
\begin{proof}
  In general, $C\triangleright(D\triangleright A)$ is isomorphic to
  $(C\otimes D)\triangleright A$, and since the monoidal category $\Coalg_\ringk{}$ is
  symmetric, isomorphic to $D\triangleright (C\triangleright A)$. Now set
  $D=C_m$.
\end{proof}

\begin{lem}
  For $\ringk{}$-algebras $A$ and $B$, the set $\Der^m_\ringk{}(A,B)$ of derivations of order
  $m$ is naturally bijective with the set of group-like elements of a coalgebra,
  namely $P(\HS_{A/\ringk{}}^m,B)$, for $0\leq m\leq +\infty$.
\end{lem}
\begin{proof}
  The proof consitsts of the following string of isomorphisms:
  \begin{equation*}
    \Der^m_\ringk{}(A,B)\cong \Alg_\ringk{}(\HS_{A/\ringk{}}^m,B)\cong \Coalg_\ringk{}(\ringk{},P(\HS_{A/\ringk{}}^m,B)).
    \qedhere
  \end{equation*}
\end{proof}


We can apply the general theory established here to infer the behavior of
Hasse-Schmidt algebras under extension of scalars. In the commutative setting, this
is crucial for defining the scheme of jet differentials \cite{zbMATH05263295}.

\begin{lem}
  If $\ringk{}\to K$ is a morphism of commutative rings, then $\HS^m_{A/\ringk{}}\otimes_\ringk{}K
  \cong \HS^m_{A\otimes_\ringk{}K/K}$.
\end{lem}
\begin{proof}
  The restriction-of-scalars
  functor $W\colon \Mod_K\to\Mod_\ringk{}$ has a left adjoint $L=(-\otimes_\ringk{}K)$ that is
  strong monoidal and symmetric, giving rise to a monoidal adjunction
  $\Alg_K\rightleftarrows\Alg_\ringk{}$ that we still denote $L\dashv W$. If $C$ is a
  $\ringk{}$-module, we have
  $W[L(C),B]_K\cong [C,W(B)]_\ringk{}$ for all $B\in\Mod_K$, where the subindex
  indicates the hom of $K$ or $\ringk{}$-linear maps. Each $\ringk{}$-coalgebra structure on the
  $\ringk{}$-module $C$ induces a $K$-coalgebra structure on $L(C)$, since $L$ is
  strong monoidal. In the case a $K$-algebra $B$,
  the isomorphism $W[L(C),B]_K\cong [C,W(B)]_\ringk{}$ respects the convolution algebra
  structure on each side. This means that the diagram of right adjoints below
  commutes up to isomorphism, and, therefore, the diagram of left adjoints
  commute up to isomorphism too.
  \begin{equation*}
    \begin{tikzcd}[sep = large]
      \Alg_K\ar[d,"W"',shift right=5pt] \ar[r,"L(C)\triangleright-",shift
      left=5pt]&\Alg_K\ar[d,"W"',shift right= 5pt] \ar[l,"{[L(C),-]}",shift
      left=5pt]
      \\
      \Alg_\ringk{}\ar[r,"C\triangleright-",shift left=5pt] \ar[u,"L"',shift right=5pt]
      & \ar[l,"{[C,-]}",shift left=5pt] \Alg_\ringk{} \ar[u,"L"',shift right=5pt]
    \end{tikzcd}
  \end{equation*}
  We deduce that the natural transformation with components
  $L(C\triangleright A)\to L(C)\triangleright L(A)$ is an isomorphism.  The
  coalgebra $L(C_m)$ is directly shown to be isomorphic to the coalgebra $C_m$
  only
  now defined over $K$.  Then, we have obtained the natural isomorphism
  \begin{equation*}
    \HS^m_{A/\ringk{}}\otimes_\ringk{}K \cong \HS^m_{A\otimes_\ringk{}K/K}.
    \qedhere
  \end{equation*}
\end{proof}

The manuscript \cite{batchelor2021generalisedhomomorphismsmeasuringcoalgebras}
showed that, for a finite dimensional algebra $B$, then $B^\circ\cong B^*$ and
there exists a canonical isomorphism
$(B^\circ\triangleright A)^\circ\cong P(A,B)$.
\begin{cor}
  If $\ringk{}$ is a field and the algebra $B$ has finite dimension, then
  \begin{equation*}
    \Der^m_\ringk{}(A,B)\cong \Der^m_\ringk{}(B^\circ\triangleright A,\ringk{}).
  \end{equation*}
\end{cor}
\begin{proof}
  We actually have a stronger result. Indeed, $P(\HS_{A/\ringk{}}^{m},B)\cong
  P(B^\circ\triangleright \HS_{A/\ringk{}}^m,\ringk{})$ by
  \cite{batchelor2021generalisedhomomorphismsmeasuringcoalgebras}, which is
  isomorphic to $P(\HS_{B^\circ\triangleright A/\ringk{}}^m,\ringk{})$ by
  Lemma~\ref{l:HS-and-tensor}.
\end{proof}

\begin{rmk}
  In \cite{zbMATH06330791} the author investigates higher derivations $D\colon
  A\to A$ satisfying $D_0=\id$, and establish their connection to a well-known
  cocommutative Hopf algebra, which coincides with the free $\ringk{}$-algebra
  generated by our coalgebra $C_\infty$. Although this structure differs from
  the Hasse-Schmidt algebra introduced herein, it merits mentioning as an
  endeavor to devise an object that classifies a nontrivial class
  of higher derivations.
\end{rmk}

\subsection{The bimodule of K\"ahler differentials}
\label{sec:module-kahl-diff}

For a fixed $\ringk{}$-algebra $A$ (not necessarily commutative), we write $\Alg_A$ for the category $A\downarrow\Alg_\ringk{}$ of $A$-algebras and $\mathsf{Bimod}_A$ for the category of
$A$-bimodules. Each $A$-algebra can be regarded as an $A$-bimodule in an obvious
way. In other words, there is a functor $Z\colon \Alg_A\to\mathsf{Bimod}_A$
sending $(B,f\colon A\to B)$ to the $A$-bimodule $B$ obtained by restricting
scalars along $f$.

The category $\mathsf{Bimod}_A$ of bimodules over a $\ringk{}$-algebra $A$ has a
monoidal structure given by $\otimes_A$. The free monoid on a bimodule $M$ can
be constructed as the tensor algebra
$T_A(M)=\bigoplus_{n\geq 0}M^{\otimes_A ^n}$ which is an object of $\Alg_A$ via
the inclusion $A\to T_A(M)$ of the component of degree $0$. This gives a functor $T_A\colon\mathsf{Bimod}_A\to\Alg_A$, and
furthermore, $T_A\dashv Z$.

Consider the functor
$D_A\colon\mathsf{Bimod}_A\to\mathsf{Set}$ that sends each $A$-bimodule $M$ to the set
of ordinary derivations $A\to M$ (see the material preceding Example~\ref{ex:3} for the
precise definition). In what follows, we give a category-theoretical proof that
$D_A$ is representable---an alternative to the classical explicit
construction \cite[Prop.~III.10.17]{zbMATH03440338}.

\begin{lem}
  \label{l:Omega-exists}
  The functor $D_A$ is represented by a bimodule $\Omega_{A/\ringk{}}$.
\end{lem}
\begin{proof}
  The fact that filtered colimits in $\mathsf{Bimod}_A$ and $\Mod_\ringk{}$ are
  constructed as in $\mathsf{Set}$ is behind much of the proof that follows.
Since $D_A$ lands on $\mathsf{Set}$, for it to be representable it suffices that it has a left adjoint. Since its domain 
$\mathsf{Bimod}_A$ is locally finitely presentable, it is enough to show that $D_A$ is accessible and continuous; see the comments leading to
  \cite[Prop.~6.1.2]{MakkaiPare}. We can construct the set of derivations $D_A(M)$ as the equaliser in
  $\mathsf{Set}$ of
  \begin{equation}
    \label{eq:2}
    \begin{tikzcd}
      \Mod_\ringk{}(A,M)\ar[r,shift left,"\phi_M"]
      \ar[r,shift right,"\psi_M"']&
      M^{A\times A}
    \end{tikzcd}
    \qquad M\in\mathsf{Bimod}_A
  \end{equation}
  where $\phi_M(h)$ is the function $(x,y)\mapsto x\cdot h(y)+h(x)\cdot y$ and
  $\psi_M(h)$ is $(x,y)\mapsto h(xy)$. Both domain and codomain in~\eqref{eq:2}
  are accessible functors $\mathsf{Bimod}_A\to\mathsf{Set}$: they preserve
  $\kappa$-filtered colimits for a regular cardinal $\kappa$ larger than the
  cardinality of $A$---in fact, the functor $(\text{-})^{A\times A}$ preserves
  $\kappa$-filtered colimits for any $\kappa$ larger than the cardinality of
  $A\times A$, but for regular cardinals, which are infinite, this statement is
  equivalent to $\kappa$ being larger than the cardinality of $A$. Then, the
equaliser of~\eqref{eq:2} is accessible~\cite[Prop.~2.4.5]{MakkaiPare}. Preservation of
limits follows from a similar argument, as both domain and codomain
in~\eqref{eq:2} are continuous functors $\mathsf{Bimod}_A\to\mathsf{Set}$.
\end{proof}

The $A$-bimodule $\Omega_{A/\ringk{}}$, sometimes called the universal
first order differential calculus on $A$ \cite{zbMATH00003680}, is a generalisation of the module of
K\"ahler differentials of a commutative $\ringk{}$-algebra $A/\ringk{}$ (see for example
\cite{zbMATH00043569}).
One constructs $\Omega_{A/\ringk}$ for a non-commutative $\ringk$-algebra $A$ as
the kernel of the multiplication map $m \colon A \otimes_{\ringk} A \to A$, with
the universal derivation given by $a \mapsto a \otimes 1 - 1 \otimes a$
(see \cite[Prop.~III.10.17]{zbMATH03440338} and \cite[\S 2.4]{Buachalla}). As
observed in \cite[\S 2]{zbMATH01588500}, the commutativity of $\ringk$ required
in these references is not strictly necessary for the validity of the
construction.

We now show that the non-commutative Hasse-Schmidt algebra of an non-commutative
algebra $A$ is the free $A$-algebra over the $A$-bimodule $\Omega_{A/\ringk{}}$.
\begin{lem}
  \label{l:HSOmega}
  There is a canonical isomorphism $T_A(\Omega_{A/\ringk{}})\cong
  \operatorname{HS}_{A/\ringk{}}^1$.
\end{lem}
\begin{proof}
  Consider the functor $Z\colon \Alg_A\to \mathsf{Bimod}_A$ sending an
  $A$-algebra $(B,f\colon A\to B)$ to the $A$-bimodule $B$ described above. Then
  \begin{equation*}
    D_A Z(B,f)=\{\delta\colon A\to B: (f,\delta)\in\Der(A,B)\}
  \end{equation*}
  where $\Der(A,B)=\Der^1_{\ringk{}}(A,B)$ as explained in \cref{sec:higherdevs}.
  On a morphism $g\colon (B,f)\to (R,gf)$, we have
  $D_A(g)(f,\delta)=(gf,g\delta)$. By Lemma~\ref{l:Omega-exists} and the fact that $T_A\dashv Z$, there are
  natural isomorphisms
  \begin{equation*}
    D_A(Z(B,f))\cong\mathsf{Bimod}(\Omega_{A/\ringk{}},Z(B,f))\cong
    \Alg_A(T_A(\Omega_{A/\ringk{}}),(B,f)).
  \end{equation*}
  
  On the other hand,  $D_AZ\cong\Alg_A((\HS_{A/\ringk{}}^1,d_0),-)$. To see this,
  notice that
  $d_1\in D_A(\HS_{A/\ringk{}}^1,d_0)$ (since $(d_0,d_1)$ is a 1-derivation) and for
  any $\delta\in D_AZ(B,f)$ there exists a unique $g\colon \HS_{A/\ringk{}}^1\to B$ in
  $\Alg$ such that $gd_0=f$ and $gd_1=\delta$. So there exists a unique $g\colon
  (\HS_{A/\ringk{}}^1,d_0)\to (B,f)$ in $\Alg_A$ with $D_AZ(g)(d_1)=\delta$.

  Composing the natural isomorphisms we deduce
  $T_A(\Omega_{A/\ringk{}})\cong\HS_{A/\ringk{}}^1$ by Yoneda.
\end{proof}

\begin{rmk}
  The preceeding result shows that the non-commutative Hasse-Schmidt algebra
  $\operatorname{HS}_{A/\ringk{}}^1$ is a graded algebra isomorphic to the
  algebra of non-commutative differential forms $\Omega^\bullet(A)$ (\emph{c.f.}
  \cite[Prop.~2.3]{zbMATH00788747}). This is a differential-graded algebra
  central to non-commutative differential geometry. It has its origins in
  \cite{zbMATH03951325,zbMATH03880914,zbMATH03798472} and further developed in
  \cite{zbMATH00788747}; see also \cite{zbMATH07915441}.
\end{rmk}

The commutative Hasse-Schmidt algebra, which we denote by
$\operatorname{cHS}_{A/\ringk{}}^m$ to distinguish it from the non-commutative version,
is of central importance in the definition of jet spaces in Algebraic
Geometry~\cite{zbMATH05263295}. It is defined by the existence of natural
isomorphisms
\begin{equation}
  \label{eq:1}
  \mathsf{CAlg_\ringk{}}(\operatorname{cHS}_{A/\ringk{}}^m,B)\cong
  \Der^m_\ringk{}(A,B)
\end{equation}
where $A$ and $B$ are restricted to the subcategory
$\mathsf{CAlg}_\ringk{}\subset\Alg_\ringk{}$ of commutative algebras. The free commutative $A$-algebra on
an $A$-(bi)module $M$ is the symmetric algebra $S_A(M)=\bigoplus_{n\geq
  0}S_A^n(M)$ where $S_A^n(A)$ is the quotient of $M^{\otimes_A^n}$ by the action of
the symmetric group on $n$ elements.
\begin{cor}
  The commutative Hasse-Schmidt algebra $\operatorname{cHS}_{A/\ringk{}}^m$ is the
  abelianisation of $\operatorname{HS}_{A/\ringk{}}^m$.  Furtheremore,
  $\operatorname{cHS}_{A/\ringk{}}^1$ is isomorphic to the symmetric algebra
  $S_A(\Omega_{A/\ringk{}})$.
\end{cor}
\begin{proof}
  The first part of the statement follows from \eqref{eq:4} and \eqref{eq:1},
  while the second is Lemma~\ref{l:HSOmega} together with the fact that $S_A(M)$
  is the abelianisation of $T_A(M)$.
\end{proof}

The second part of the corollary recovers \cite[(1.4)]{zbMATH05263295} without
using any explict presentation of $\operatorname{cHS}^1_{A/\ringk{}}$.

\subsection{Higher derivations of modules}

In this section we interpret higher derivations of modules in terms of the theory
set forward in previous sections. Higher derivations of modules were introduced
in \cite{zbMATH03843957}, where universal modules classifying module derivations
are constructed in its \S 4. We show that these universal modules, sometimes
called Hasse-Schmidt modules, can be naturally constructed as tensor products in
the enriched category of modules presented in
Section~\ref{sec:enrichmentofmodincomod}.

Suppose that $(D_i\colon A\to B:i\geq 0)$ is a higher derivation of order
$0\leq m\leq +\infty$ of the $\ringk{}$-algebra $A$ in the $\ringk{}$-algebra $B$. If $M$ is a
right $A$-module and $N$ is a right $B$-module, a \emph{module derivation from
  $M$ to $N$ over $D$} of length $m$, or \emph{module $D$-derivation}, is a
sequence of $\ringk{}$-linear maps $(d_i\colon M\to N:0\leq i\leq m)$ such that
\begin{equation*}
  d_k(x\cdot a)=\sum_{i+j=k}d_i(x)\cdot D_j(a)\qquad x\in M,\ a\in A.
\end{equation*}
Then, a module derivation of length $0$ consists of an algebra derivation
$D_0\colon A\to B$ of
length $0$, which is to say an algebra map, together with a $\ringk{}$-linear map
$d_0\colon M\to N$ such that $d_0(x\cdot a)=d_0(x)\cdot D_0(a)$, for $x\in M$,
$a\in A$; in other words, $d_0$ is a module map over $D_0$.

A module derivation of length $m=1$ consists of an algebra map $D_0\colon A\to
B$, a derivation $D_1\colon A\to B$, where $B$ is regarded as an $A$-bimodule
via $D_0$, a module map $d_0$ over $D_0$ and a $\ringk{}$-linear map $d_1\colon M\to N$
such that $d_1(x\cdot a)=d_1(x)\cdot D_0(a)+ d_0(x)\cdot D_1(a)$; see Example~\ref{ex:5}.
These module derivations appear in the literature most often in the case when
$D_0=1_A$ and $d_0=1_M$, so we only have a derivation $\delta\colon A\to A$ and
a morphism $d\colon M\to M$ such that $d(x\cdot a)=d(x)\cdot a+ x\cdot \delta(a)$.

We denote the set of module derivations from $M_A$ to $N_B$ of length $m$ by
$\MDer^m_\ringk{}(M_A,N_B)$.
Below we show that these are the hom-sets of a category. We proceed in the same
fashion we did in the previous section for higher (algebra) derivations.

The coalgebra $C_m$ that we used to relate measurings with higher derivations in
the previous section can be regarded as a right comodule over itself. 

\begin{prop}
  There exist bijections
  \begin{equation*}
    \MDer^m_\ringk{}(M_A,N_B)\cong\Meas(M_A,C_m,N_B)
    \cong \Comod(C_m,Q(M_A,N_B))\cong \Mod(C_m\triangleright M_A,N_B)
  \end{equation*}
  natural in $M_A,N_B\in\Mod$. There is a category with objects given by modules
  and
  homs given by the sets $\MDer^m_\ringk{}(M_A,N_B)$.
\end{prop}
\begin{proof}
  A $C_m$-module measuring consists of an algebra measuring $\phi\colon A\otimes
  C_m\to B$ with a $\ringk{}$-linear map $\psi\colon M\otimes C_m\to B$ that satisfies
  \begin{equation*}
    \psi(u\cdot x\otimes v_k)=\sum_{i+j=k}\psi(u\otimes v_i)\cdot\phi(x\otimes
    v_j)\qquad u\in M,x\in A
  \end{equation*}
  This is to say that the maps $d_k=\psi(-\otimes v_k)\colon M\to N$ form a
  module $D$-derivation where $D=(D_k=\phi(-\otimes v_k))$ is the higher
  derivation that corresponds to the measuring $\phi$.
  The rest of the isomorphisms are the definition of $Q(M_A,N_B)$ and of the
  tensor of a module by a comodule. Finally, the $C_m$-comodule $C_m$ is a
  comonoid in $\Comod$ by virtue of its cocommutativity. Therefore,
  $\Comod(C_m,-)$ is a lax monoidal functor $\Comod\to\mathsf{Set}$ which
  therefore induces an ordinary category from the $\Comod$-category of modules,
  with homs $\Comod(C_m,Q(M_A,N_B))$.
\end{proof}

The module $C_m\triangleright M_A$ is defined over $C_m\triangleright A\cong
\HS_{A/\ringk{}}^m$, it classifies module derivations and may be called the \emph{Hasse-Schmidt
module} of $M_A$.

\bibliographystyle{plain}
\bibliography{references}

\end{document}